 \def\dated#1{\def\thedate{#1}}%
\newdimen\high%
\newdimen\ul%
\newdimen\wdth%
\def\ratchet#1#2{\ifnum#1<#2\global #1=#2\fi}%
\def\ifnextchar#1#2#3{\let\@tempe%
#1\def\@tempa{#2}\def\@tempb{#3}\futurelet%
    \@tempc\@ifnch}%
\def\@ifnch{\ifx \@tempc \@sptoken \let\@tempd\@xifnch%
      \else \ifx \@tempc \@tempe\let\@tempd\@tempa\else\let\@tempd\@tempb\fi%
      \fi \@tempd}%
\def\:{\let\@sptoken= } \:  
\def\:{\@xifnch} \expandafter\def\: {\futurelet\@tempc\@ifnch}%
\let\ifnextchar\@ifnextchar%
\newdimen\axis \axis=\fontdimen22\textfont2%
\def\scalefactor#1{\ul=#1\ul \X@xbase=#1\X@xbase \Y@ybase=#1\Y@ybase}%
\def\hscalefactor#1{\ul=#1\ul \X@xbase=#1\X@xbase}%
\def\vscalefactor#1{\ul=#1\ul \Y@ybase=#1\Y@ybase}%
\def\fontscale#1{%
\if#1h\relax%
\font\xydashfont=xydash10 scaled \magstephalf%
\font\xyatipfont=xyatip10 scaled \magstephalf%
\font\xybtipfont=xybtip10 scaled \magstephalf%
\font\xybsqlfont=xybsql10 scaled \magstephalf%
\font\xycircfont=xycirc10 scaled \magstephalf%
\else%
\font\xydashfont=xydash10 scaled \magstep#1%
\font\xyatipfont=xyatip10 scaled \magstep#1%
\font\xybtipfont=xybtip10 scaled \magstep#1%
\font\xybsqlfont=xybsql10 scaled \magstep#1%
\font\xycircfont=xycirc10 scaled \magstep#1%
\fi}%
\def\bfig{\vcenter\bgroup\xy}%
\def\efig{\endxy\egroup}%
\def\car#1#2\nil{#1}%
\def\morphism{\ifnextchar({\morphismp}{\morphismp(0,0)}}%
\def\morphismp(#1){\ifnextchar|{\morphismpp(#1)}{\morphismpp(#1)|a|}}%
\def\morphismpp(#1)|#2|{\ifnextchar/{\morphismppp(#1)|#2|}%
    {\morphismppp(#1)|#2|/>/}}%
\def\morphismppp(#1)|#2|/#3/{%
    \ifnextchar<{\morphismpppp(#1)|#2|/#3/}%
    {\morphismpppp(#1)|#2|/#3/<\default,0>}}%
\def\morphismpppp(#1,#2)|#3|/#4/<#5,#6>[#7`#8;#9]{%
\xend#1\advance \xend by #5%
\yend#2\advance \yend by #6%
\domorphism(#1,#2)|#3|/#4/<#5,#6>[{#7}`{#8};{#9}]}%
\def\domorphism(#1,#2)|#3|/#4/<#5,#6>[#7`#8;#9]{%
\def\next{\car#4.\nil}%
\if@\next\relax%
 \if#3l%
  \ifnum #6>0%
   \POS(#1,#2)*+!!<0ex,\axis>{#7}\ar#4^-{#9} (\xend,\yend)*+!!<0ex,\axis>{#8}%
  \else%
   \POS(#1,#2)*+!!<0ex,\axis>{#7}\ar#4_-{#9} (\xend,\yend)*+!!<0ex,\axis>{#8}%
  \fi%
 \else \if#3m%
    \setbox0\hbox{$#9$}%
   \ifdim \wd0=0pt%
     \POS(#1,#2)*+!!<0ex,\axis>{#7}\ar#4 (\xend,\yend)*+!!<0ex,\axis>{#8}%
   \else%
     \POS(#1,#2)*+!!<0ex,\axis>{#7}\ar#4|-*+<1pt,4pt>{\labelstyle#9}%
       (\xend,\yend)*+!!<0ex,\axis>{#8}%
   \fi%
 \else \if#3r%
  \ifnum #6<0%
   \POS(#1,#2)*+!!<0ex,\axis>{#7}\ar#4^-{#9} (\xend,\yend)*+!!<0ex,\axis>{#8}%
  \else%
   \POS(#1,#2)*+!!<0ex,\axis>{#7}\ar#4_-{#9} (\xend,\yend)*+!!<0ex,\axis>{#8}%
  \fi%
 \else \if#3a%
  \ifnum #5>0%
   \POS(#1,#2)*+!!<0ex,\axis>{#7}\ar#4^-{#9} (\xend,\yend)*+!!<0ex,\axis>{#8}%
  \else%
   \POS(#1,#2)*+!!<0ex,\axis>{#7}\ar#4_-{#9} (\xend,\yend)*+!!<0ex,\axis>{#8}%
  \fi%
 \else \if#3b%
  \ifnum #5<0%
   \POS(#1,#2)*+!!<0ex,\axis>{#7}\ar#4^-{#9} (\xend,\yend)*+!!<0ex,\axis>{#8}%
  \else%
   \POS(#1,#2)*+!!<0ex,\axis>{#7}\ar#4_-{#9} (\xend,\yend)*+!!<0ex,\axis>{#8}%
  \fi%
 \else%
   \POS(#1,#2)*+!!<0ex,\axis>{#7}\ar#4 (\xend,\yend)*+!!<0ex,\axis>{#8}%
 \fi\fi\fi\fi\fi%
\else%
 \if#3l%
  \ifnum #6>0%
   \POS(#1,#2)*+!!<0ex,\axis>{#7}\ar@{#4}^-{#9} (\xend,\yend)*+!!<0ex,\axis>{#8}%
  \else%
   \POS(#1,#2)*+!!<0ex,\axis>{#7}\ar@{#4}_-{#9} (\xend,\yend)*+!!<0ex,\axis>{#8}%
  \fi%
 \else \if#3m%
    \setbox0\hbox{$#9$}%
   \ifdim \wd0=0pt%
     \POS(#1,#2)*+!!<0ex,\axis>{#7}\ar@{#4} (\xend,\yend)*+!!<0ex,\axis>{#8}%
   \else%
     \POS(#1,#2)*+!!<0ex,\axis>{#7}\ar@{#4}|-*+<1pt,4pt>{\labelstyle#9}%
         (\xend,\yend)*+!!<0ex,\axis>{#8}%
   \fi%
 \else \if#3r%
  \ifnum #6<0%
   \POS(#1,#2)*+!!<0ex,\axis>{#7}\ar@{#4}^-{#9} (\xend,\yend)*+!!<0ex,\axis>{#8}%
  \else%
   \POS(#1,#2)*+!!<0ex,\axis>{#7}\ar@{#4}_-{#9} (\xend,\yend)*+!!<0ex,\axis>{#8}%
  \fi%
 \else \if#3a%
  \ifnum #5>0%
   \POS(#1,#2)*+!!<0ex,\axis>{#7}\ar@{#4}^-{#9} (\xend,\yend)*+!!<0ex,\axis>{#8}%
  \else%
   \POS(#1,#2)*+!!<0ex,\axis>{#7}\ar@{#4}_-{#9} (\xend,\yend)*+!!<0ex,\axis>{#8}%
  \fi%
 \else \if#3b%
  \ifnum #5<0%
   \POS(#1,#2)*+!!<0ex,\axis>{#7}\ar@{#4}^-{#9} (\xend,\yend)*+!!<0ex,\axis>{#8}%
  \else%
   \POS(#1,#2)*+!!<0ex,\axis>{#7}\ar@{#4}_-{#9} (\xend,\yend)*+!!<0ex,\axis>{#8}%
  \fi%
 \else%
   \POS(#1,#2)*+!!<0ex,\axis>{#7}\ar@{#4} (\xend,\yend)*+!!<0ex,\axis>{#8}%
 \fi\fi\fi\fi\fi%
\fi\ignorespaces}%
\def\vect(#1,#2)/#3/<#4,#5>{%
 \xend#1 \yend#2 \advance\xend by #4 \advance\yend by #5%
     \POS(#1,#2)\ar#3 (\xend,\yend)}%
\def\squarepppp(#1,#2)|#3|/#4`#5`#6`#7/<#8>[#9]{%
\xpos#1\ypos#2%
\def\next|##1##2##3##4|{%
 \def\xa{##1}\def\xb{##2}\def\xc{##3}\def\xd{##4}\ignorespaces}%
\next|#3|%
\def\next<##1,##2>{\deltax=##1\deltay=##2\ignorespaces}%
\next<#8>%
\def\next[##1`##2`##3`##4;##5`##6`##7`##8]{%
    \def\nodea{##1}\def\nodeb{##2}\def\nodec{##3}\def\noded{##4}%
    \def\labela{##5}\def\labelb{##6}\def\labelc{##7}\def\labeld{##8}\ignorespaces}%
\next[#9]%
\morphism(\xpos,\ypos)|\xd|/{#7}/<\deltax,0>[\nodec`\noded;\labeld]%
\advance \ypos by \deltay%
\morphism(\xpos,\ypos)|\xb|/{#5}/<0,-\deltay>[\nodea`\nodec;\labelb]%
\morphism(\xpos,\ypos)|\xa|/{#4}/<\deltax,0>[\nodea`\nodeb;\labela]%
 \advance \xpos by \deltax%
\morphism(\xpos,\ypos)|\xc|/{#6}/<0,-\deltay>[\nodeb`\noded;\labelc]%
\ignorespaces}%
\def\square{\ifnextchar({\squarep}{\squarep(0,0)}}%
\def\squarep(#1){\ifnextchar|{\squarepp(#1)}{\squarepp(#1)|alrb|}}%
\def\squarepp(#1)|#2|{\ifnextchar/{\squareppp(#1)|#2|}%
    {\squareppp(#1)|#2|/>`>`>`>/}}%
\def\squareppp(#1)|#2|/#3`#4`#5`#6/{%
    \ifnextchar<{\squarepppp(#1)|#2|/#3`#4`#5`#6/}%
    {\squarepppp(#1)|#2|/#3`#4`#5`#6/<\default,\default>}}%
\def\ptrianglepppp(#1,#2)|#3|/#4`#5`#6/<#7>[#8]{%
\xpos#1\ypos#2%
\def\next|##1##2##3|{\def\xa{##1}\def\xb{##2}\def\xc{##3}}%
\next|#3|%
\def\next<##1,##2>{\deltax=##1\deltay=##2\ignorespaces}%
\next<#7>%
\def\next[##1`##2`##3;##4`##5`##6]{%
    \def\nodea{##1}\def\nodeb{##2}\def\nodec{##3}%
    \def\labela{##4}\def\labelb{##5}\def\labelc{##6}}%
\next[#8]%
\advance\ypos by \deltay%
\morphism(\xpos,\ypos)|\xa|/{#4}/<\deltax,0>[\nodea`\nodeb;\labela]%
\morphism(\xpos,\ypos)|\xb|/{#5}/<0,-\deltay>[\nodea`\nodec;\labelb]%
\advance\xpos by \deltax%
\morphism(\xpos,\ypos)|\xc|/{#6}/<-\deltax,-\deltay>[\nodeb`\nodec;\labelc]%
\ignorespaces}%
\def\qtrianglepppp(#1,#2)|#3|/#4`#5`#6/<#7>[#8]{%
\xpos#1\ypos#2%
\def\next|##1##2##3|{\def\xa{##1}\def\xb{##2}\def\xc{##3}}%
\next|#3|%
\def\next<##1,##2>{\deltax=##1\deltay=##2\ignorespaces}%
\next<#7>%
\def\next[##1`##2`##3;##4`##5`##6]{%
    \def\nodea{##1}\def\nodeb{##2}\def\nodec{##3}%
    \def\labela{##4}\def\labelb{##5}\def\labelc{##6}}%
\next[#8]%
\advance\ypos by \deltay%
\morphism(\xpos,\ypos)|\xa|/{#4}/<\deltax,0>[\nodea`\nodeb;\labela]%
\morphism(\xpos,\ypos)|\xb|/{#5}/<\deltax,-\deltay>[\nodea`\nodec;\labelb]%
\advance\xpos by \deltax%
\morphism(\xpos,\ypos)|\xc|/{#6}/<0,-\deltay>[\nodeb`\nodec;\labelc]%
\ignorespaces}%
\def\dtrianglepppp(#1,#2)|#3|/#4`#5`#6/<#7>[#8]{%
\xpos#1\ypos#2%
\def\next|##1##2##3|{\def\xa{##1}\def\xb{##2}\def\xc{##3}}%
\next|#3|%
\def\next<##1,##2>{\deltax=##1\deltay=##2\ignorespaces}%
\next<#7>%
\def\next[##1`##2`##3;##4`##5`##6]{%
    \def\nodea{##1}\def\nodeb{##2}\def\nodec{##3}%
    \def\labela{##4}\def\labelb{##5}\def\labelc{##6}}%
\next[#8]%
\morphism(\xpos,\ypos)|\xc|/{#6}/<\deltax,0>[\nodeb`\nodec;\labelc]%
\advance\ypos by \deltay\advance \xpos by \deltax%
\morphism(\xpos,\ypos)|\xa|/{#4}/<-\deltax,-\deltay>[\nodea`\nodeb;\labela]%
\morphism(\xpos,\ypos)|\xb|/{#5}/<0,-\deltay>[\nodea`\nodec;\labelb]%
\ignorespaces}%
\def\btrianglepppp(#1,#2)|#3|/#4`#5`#6/<#7>[#8]{%
\xpos#1\ypos#2%
\def\next|##1##2##3|{\def\xa{##1}\def\xb{##2}\def\xc{##3}}%
\next|#3|%
\def\next<##1,##2>{\deltax=##1\deltay=##2\ignorespaces}%
\next<#7>%
\def\next[##1`##2`##3;##4`##5`##6]{%
    \def\nodea{##1}\def\nodeb{##2}\def\nodec{##3}%
    \def\labela{##4}\def\labelb{##5}\def\labelc{##6}}%
\next[#8]%
\morphism(\xpos,\ypos)|\xc|/{#6}/<\deltax,0>[\nodeb`\nodec;\labelc]%
\advance\ypos by \deltay%
\morphism(\xpos,\ypos)|\xa|/{#4}/<0,-\deltay>[\nodea`\nodeb;\labela]%
\morphism(\xpos,\ypos)|\xb|/{#5}/<\deltax,-\deltay>[\nodea`\nodec;\labelb]%
\ignorespaces}%
\def\Atrianglepppp(#1,#2)|#3|/#4`#5`#6/<#7>[#8]{%
\xpos#1\ypos#2%
\def\next|##1##2##3|{\def\xa{##1}\def\xb{##2}\def\xc{##3}}%
\next|#3|%
\def\next<##1,##2>{\deltax=##1\deltay=##2\ignorespaces}%
\next<#7>%
\def\next[##1`##2`##3;##4`##5`##6]{%
    \def\nodea{##1}\def\nodeb{##2}\def\nodec{##3}%
    \def\labela{##4}\def\labelb{##5}\def\labelc{##6}}%
\next[#8]%
\multiply\deltax by 2%
\morphism(\xpos,\ypos)|\xc|/{#6}/<\deltax,0>[\nodeb`\nodec;\labelc]%
\divide\deltax by 2%
\advance\ypos by \deltay\advance\xpos by \deltax%
\morphism(\xpos,\ypos)|\xa|/{#4}/<-\deltax,-\deltay>[\nodea`\nodeb;\labela]%
\morphism(\xpos,\ypos)|\xb|/{#5}/<\deltax,-\deltay>[\nodea`\nodec;\labelb]%
\ignorespaces}%
\def\Vtrianglepppp(#1,#2)|#3|/#4`#5`#6/<#7>[#8]{%
\xpos#1\ypos#2%
\def\next|##1##2##3|{\def\xa{##1}\def\xb{##2}\def\xc{##3}}%
\next|#3|%
\def\next<##1,##2>{\deltax=##1\deltay=##2\ignorespaces}%
\next<#7>%
\def\next[##1`##2`##3;##4`##5`##6]{%
    \def\nodea{##1}\def\nodeb{##2}\def\nodec{##3}%
    \def\labela{##4}\def\labelb{##5}\def\labelc{##6}}%
\next[#8]%
\advance\ypos by \deltay%
\morphism(\xpos,\ypos)|\xb|/{#5}/<\deltax,-\deltay>[\nodea`\nodec;\labelb]%
\multiply\deltax by 2%
\morphism(\xpos,\ypos)|\xa|/{#4}/<\deltax,0>[\nodea`\nodeb;\labela]%
\advance\xpos by \deltax \divide \deltax by 2%
\morphism(\xpos,\ypos)|\xc|/{#6}/<-\deltax,-\deltay>[\nodeb`\nodec;\labelc]%
\ignorespaces}%
\def\Ctrianglepppp(#1,#2)|#3|/#4`#5`#6/<#7>[#8]{%
\xpos#1\ypos#2%
\def\next|##1##2##3|{\def\xa{##1}\def\xb{##2}\def\xc{##3}}%
\next|#3|%
\def\next<##1,##2>{\deltax=##1\deltay=##2\ignorespaces}%
\next<#7>%
\def\next[##1`##2`##3;##4`##5`##6]{%
    \def\nodea{##1}\def\nodeb{##2}\def\nodec{##3}%
    \def\labela{##4}\def\labelb{##5}\def\labelc{##6}}%
\next[#8]%
\advance \ypos by \deltay%
\morphism(\xpos,\ypos)|\xc|/{#6}/<\deltax,-\deltay>[\nodeb`\nodec;\labelc]%
\advance\ypos by \deltay \advance \xpos by \deltax%
\morphism(\xpos,\ypos)|\xa|/{#4}/<-\deltax,-\deltay>[\nodea`\nodeb;\labela]%
\multiply\deltay by 2%
\morphism(\xpos,\ypos)|\xb|/{#5}/<0,-\deltay>[\nodea`\nodec;\labelb]%
\ignorespaces}%
\def\Dtrianglepppp(#1,#2)|#3|/#4`#5`#6/<#7>[#8]{%
\xpos#1\ypos#2%
\def\next|##1##2##3|{\def\xa{##1}\def\xb{##2}\def\xc{##3}}%
\next|#3|%
\def\next<##1,##2>{\deltax=##1\deltay=##2\ignorespaces}%
\next<#7>%
\def\next[##1`##2`##3;##4`##5`##6]{%
    \def\nodea{##1}\def\nodeb{##2}\def\nodec{##3}%
    \def\labela{##4}\def\labelb{##5}\def\labelc{##6}}%
\next[#8]%
\advance\xpos by \deltax \advance\ypos by \deltay%
\morphism(\xpos,\ypos)|\xc|/{#6}/<-\deltax,-\deltay>[\nodeb`\nodec;\labelc]%
\advance\xpos by -\deltax \advance\ypos by \deltay%
\morphism(\xpos,\ypos)|\xb|/{#5}/<\deltax,-\deltay>[\nodea`\nodeb;\labelb]%
\multiply \deltay by 2%
\morphism(\xpos,\ypos)|\xa|/{#4}/<0,-\deltay>[\nodea`\nodec;\labela]%
\ignorespaces}%
\def\ptriangle{\ifnextchar({\ptrianglep}{\ptrianglep(0,0)}}%
\def\ptrianglep(#1){\ifnextchar|{\ptrianglepp(#1)}{\ptrianglepp(#1)|alr|}}%
\def\ptrianglepp(#1)|#2|{\ifnextchar/{\ptriangleppp(#1)|#2|}%
    {\ptriangleppp(#1)|#2|/>`>`>/}}%
\def\ptriangleppp(#1)|#2|/#3`#4`#5/{%
    \ifnextchar<{\ptrianglepppp(#1)|#2|/#3`#4`#5/}%
    {\ptrianglepppp(#1)|#2|/#3`#4`#5/<\default,\default>}}%
\def\qtrianglep(#1){\ifnextchar|{\qtrianglepp(#1)}{\qtrianglepp(#1)|alr|}}%
\def\qtrianglepp(#1)|#2|{\ifnextchar/{\qtriangleppp(#1)|#2|}%
    {\qtriangleppp(#1)|#2|/>`>`>/}}%
\def\qtriangleppp(#1)|#2|/#3`#4`#5/{%
    \ifnextchar<{\qtrianglepppp(#1)|#2|/#3`#4`#5/}%
    {\qtrianglepppp(#1)|#2|/#3`#4`#5/<\default,\default>}}%
\def\dtrianglep(#1){\ifnextchar|{\dtrianglepp(#1)}{\dtrianglepp(#1)|lrb|}}%
\def\dtrianglepp(#1)|#2|{\ifnextchar/{\dtriangleppp(#1)|#2|}%
    {\dtriangleppp(#1)|#2|/>`>`>/}}%
\def\dtriangleppp(#1)|#2|/#3`#4`#5/{%
    \ifnextchar<{\dtrianglepppp(#1)|#2|/#3`#4`#5/}%
    {\dtrianglepppp(#1)|#2|/#3`#4`#5/<\default,\default>}}%
\def\btrianglep(#1){\ifnextchar|{\btrianglepp(#1)}{\btrianglepp(#1)|lrb|}}%
\def\btrianglepp(#1)|#2|{\ifnextchar/{\btriangleppp(#1)|#2|}%
    {\btriangleppp(#1)|#2|/>`>`>/}}%
\def\btriangleppp(#1)|#2|/#3`#4`#5/{%
    \ifnextchar<{\btrianglepppp(#1)|#2|/#3`#4`#5/}%
    {\btrianglepppp(#1)|#2|/#3`#4`#5/<\default,\default>}}%
\def\Atrianglep(#1){\ifnextchar|{\Atrianglepp(#1)}{\Atrianglepp(#1)|lrb|}}%
\def\Atrianglepp(#1)|#2|{\ifnextchar/{\Atriangleppp(#1)|#2|}%
    {\Atriangleppp(#1)|#2|/>`>`>/}}%
\def\Atriangleppp(#1)|#2|/#3`#4`#5/{%
    \ifnextchar<{\Atrianglepppp(#1)|#2|/#3`#4`#5/}%
    {\Atrianglepppp(#1)|#2|/#3`#4`#5/<\default,\default>}}%
\def\Vtrianglep(#1){\ifnextchar|{\Vtrianglepp(#1)}{\Vtrianglepp(#1)|alb|}}%
\def\Vtrianglepp(#1)|#2|{\ifnextchar/{\Vtriangleppp(#1)|#2|}%
    {\Vtriangleppp(#1)|#2|/>`>`>/}}%
\def\Vtriangleppp(#1)|#2|/#3`#4`#5/{%
    \ifnextchar<{\Vtrianglepppp(#1)|#2|/#3`#4`#5/}%
    {\Vtrianglepppp(#1)|#2|/#3`#4`#5/<\default,\default>}}%
\def\Ctrianglep(#1){\ifnextchar|{\Ctrianglepp(#1)}{\Ctrianglepp(#1)|arb|}}%
\def\Ctrianglepp(#1)|#2|{\ifnextchar/{\Ctriangleppp(#1)|#2|}%
    {\Ctriangleppp(#1)|#2|/>`>`>/}}%
\def\Ctriangleppp(#1)|#2|/#3`#4`#5/{%
    \ifnextchar<{\Ctrianglepppp(#1)|#2|/#3`#4`#5/}%
    {\Ctrianglepppp(#1)|#2|/#3`#4`#5/<\default,\default>}}%
\def\Dtrianglep(#1){\ifnextchar|{\Dtrianglepp(#1)}{\Dtrianglepp(#1)|lab|}}%
\def\Dtrianglepp(#1)|#2|{\ifnextchar/{\Dtriangleppp(#1)|#2|}%
    {\Dtriangleppp(#1)|#2|/>`>`>/}}%
\def\Dtriangleppp(#1)|#2|/#3`#4`#5/{%
    \ifnextchar<{\Dtrianglepppp(#1)|#2|/#3`#4`#5/}%
    {\Dtrianglepppp(#1)|#2|/#3`#4`#5/<\default,\default>}}%
\def\Atrianglepairpppp(#1)|#2|/#3`#4`#5`#6`#7/<#8>[#9]{%
\def\next(##1,##2){\xpos##1\ypos##2}%
\next(#1)%
\def\next|##1##2##3##4##5|{\def\xa{##1}\def\xb{##2}%
\def\xc{##3}\def\xd{##4}\def\xe{##5}}%
\next|#2|%
\def\next<##1,##2>{\deltax=##1\deltay=##2\ignorespaces}%
\next<#8>%
\def\next[##1`##2`##3`##4;##5`##6`##7`##8`##9]{%
 \def\nodea{##1}\def\nodeb{##2}\def\nodec{##3}\def\noded{##4}%
 \def\labela{##5}\def\labelb{##6}\def\labelc{##7}\def\labeld{##8}\def\labele{##9}}%
\next[#9]%
\morphism(\xpos,\ypos)|\xd|/{#6}/<\deltax,0>[\nodeb`\nodec;\labeld]%
\advance\xpos by \deltax%
\morphism(\xpos,\ypos)|\xe|/{#7}/<\deltax,0>[\nodec`\noded;\labele]%
\advance\ypos by \deltay%
\morphism(\xpos,\ypos)|\xa|/{#3}/<-\deltax,-\deltay>[\nodea`\nodeb;\labela]%
\morphism(\xpos,\ypos)|\xb|/{#4}/<0,-\deltay>[\nodea`\nodec;\labelb]%
\morphism(\xpos,\ypos)|\xc|/{#5}/<\deltax,-\deltay>[\nodea`\noded;\labelc]%
\ignorespaces}%
\def\Vtrianglepairpppp(#1)|#2|/#3`#4`#5`#6`#7/<#8>[#9]{%
\def\next(##1,##2){\xpos##1\ypos##2}%
\next(#1)%
\def\next|##1##2##3##4##5|{\def\xa{##1}\def\xb{##2}%
\def\xc{##3}\def\xd{##4}\def\xe{##5}}%
\next|#2|%
\def\next<##1,##2>{\deltax=##1\deltay=##2\ignorespaces}%
\next<#8>%
\def\next[##1`##2`##3`##4;##5`##6`##7`##8`##9]{%
 \def\nodea{##1}\def\nodeb{##2}\def\nodec{##3}\def\noded{##4}%
 \def\labela{##5}\def\labelb{##6}\def\labelc{##7}\def\labeld{##8}\def\labele{##9}}%
\next[#9]%
\advance\ypos by \deltay%
\morphism(\xpos,\ypos)|\xa|/{#3}/<\deltax,0>[\nodea`\nodeb;\labela]%
\morphism(\xpos,\ypos)|\xc|/{#5}/<\deltax,-\deltay>[\nodea`\noded;\labelc]%
\advance\xpos by \deltax%
\morphism(\xpos,\ypos)|\xb|/{#4}/<\deltax,0>[\nodeb`\nodec;\labelb]%
\morphism(\xpos,\ypos)|\xd|/{#6}/<0,-\deltay>[\nodeb`\noded;\labeld]%
\advance\xpos by \deltax%
\morphism(\xpos,\ypos)|\xe|/{#7}/<-\deltax,-\deltay>[\nodec`\noded;\labele]%
\ignorespaces}%
\def\Ctrianglepairpppp(#1)|#2|/#3`#4`#5`#6`#7/<#8>[#9]{%
\def\next(##1,##2){\xpos##1\ypos##2}%
\next(#1)%
\def\next|##1##2##3##4##5|{\def\xa{##1}\def\xb{##2}%
\def\xc{##3}\def\xd{##4}\def\xe{##5}}%
\next|#2|%
\def\next<##1,##2>{\deltax=##1\deltay=##2\ignorespaces}%
\next<#8>%
\def\next[##1`##2`##3`##4;##5`##6`##7`##8`##9]{%
 \def\nodea{##1}\def\nodeb{##2}\def\nodec{##3}\def\noded{##4}%
 \def\labela{##5}\def\labelb{##6}\def\labelc{##7}\def\labeld{##8}\def\labele{##9}}%
\next[#9]%
\advance\ypos by \deltay%
\morphism(\xpos,\ypos)|\xe|/{#7}/<0,-\deltay>[\nodec`\noded;\labele]%
\advance\xpos by -\deltax%
\morphism(\xpos,\ypos)|\xc|/{#5}/<\deltax,0>[\nodeb`\nodec;\labelc]%
\morphism(\xpos,\ypos)|\xd|/{#6}/<\deltax,-\deltay>[\nodeb`\noded;\labeld]%
\advance\ypos by \deltay%
\advance\xpos by \deltax%
\morphism(\xpos,\ypos)|\xa|/{#3}/<-\deltax,-\deltay>[\nodea`\nodeb;\labela]%
\morphism(\xpos,\ypos)|\xb|/{#4}/<0,-\deltay>[\nodea`\nodec;\labelb]%
\ignorespaces}%
\def\Dtrianglepairpppp(#1)|#2|/#3`#4`#5`#6`#7/<#8>[#9]{%
\def\next(##1,##2){\xpos##1\ypos##2}%
\next(#1)%
\def\next|##1##2##3##4##5|{\def\xa{##1}\def\xb{##2}%
\def\xc{##3}\def\xd{##4}\def\xe{##5}}%
\next|#2|%
\def\next<##1,##2>{\deltax=##1\deltay=##2\ignorespaces}%
\next<#8>%
\def\next[##1`##2`##3`##4;##5`##6`##7`##8`##9]{%
 \def\nodea{##1}\def\nodeb{##2}\def\nodec{##3}\def\noded{##4}%
 \def\labela{##5}\def\labelb{##6}\def\labelc{##7}\def\labeld{##8}\def\labele{##9}}%
\next[#9]%
\advance\ypos by \deltay%
\morphism(\xpos,\ypos)|\xc|/{#5}/<\deltax,0>[\nodeb`\nodec;\labelc]%
\morphism(\xpos,\ypos)|\xd|/{#6}/<0,-\deltay>[\nodeb`\noded;\labeld]%
\advance\ypos by \deltay%
\morphism(\xpos,\ypos)|\xa|/{#3}/<0,-\deltay>[\nodea`\nodeb;\labela]%
\morphism(\xpos,\ypos)|\xb|/{#4}/<\deltax,-\deltay>[\nodea`\nodec;\labelb]%
\advance\ypos by -\deltay%
\advance\xpos by \deltax%
\morphism(\xpos,\ypos)|\xe|/{#7}/<-\deltax,-\deltay>[\nodec`\noded;\labele]%
\ignorespaces}%
\def\Atrianglepairp(#1){\ifnextchar|{\Atrianglepairpp(#1)}%
{\Atrianglepairpp(#1)|lmrbb|}}%
\def\Atrianglepairpp(#1)|#2|{\ifnextchar/{\Atrianglepairppp(#1)|#2|}%
    {\Atrianglepairppp(#1)|#2|/>`>`>`>`>/}}%
\def\Atrianglepairppp(#1)|#2|/#3`#4`#5`#6`#7/{%
    \ifnextchar<{\Atrianglepairpppp(#1)|#2|/#3`#4`#5`#6`#7/}%
    {\Atrianglepairpppp(#1)|#2|/#3`#4`#5`#6`#7/<\default,\default>}}%
\def\Vtrianglepairp(#1){\ifnextchar|{\Vtrianglepairpp(#1)}%
{\Vtrianglepairpp(#1)|aalmr|}}%
\def\Vtrianglepairpp(#1)|#2|{\ifnextchar/{\Vtrianglepairppp(#1)|#2|}%
    {\Vtrianglepairppp(#1)|#2|/>`>`>`>`>/}}%
\def\Vtrianglepairppp(#1)|#2|/#3`#4`#5`#6`#7/{%
    \ifnextchar<{\Vtrianglepairpppp(#1)|#2|/#3`#4`#5`#6`#7/}%
    {\Vtrianglepairpppp(#1)|#2|/#3`#4`#5`#6`#7/<\default,\default>}}%
\def\Ctrianglepairp(#1){\ifnextchar|{\Ctrianglepairpp(#1)}%
{\Ctrianglepairpp(#1)|lrmlr|}}%
\def\Ctrianglepairpp(#1)|#2|{\ifnextchar/{\Ctrianglepairppp(#1)|#2|}%
    {\Ctrianglepairppp(#1)|#2|/>`>`>`>`>/}}%
\def\Ctrianglepairppp(#1)|#2|/#3`#4`#5`#6`#7/{%
    \ifnextchar<{\Ctrianglepairpppp(#1)|#2|/#3`#4`#5`#6`#7/}%
    {\Ctrianglepairpppp(#1)|#2|/#3`#4`#5`#6`#7/<\default,\default>}}%
\def\Dtrianglepairp(#1){\ifnextchar|{\Dtrianglepairpp(#1)}%
{\Dtrianglepairpp(#1)|lrmlr|}}%
\def\Dtrianglepairpp(#1)|#2|{\ifnextchar/{\Dtrianglepairppp(#1)|#2|}%
    {\Dtrianglepairppp(#1)|#2|/>`>`>`>`>/}}%
\def\Dtrianglepairppp(#1)|#2|/#3`#4`#5`#6`#7/{%
    \ifnextchar<{\Dtrianglepairpppp(#1)|#2|/#3`#4`#5`#6`#7/}%
    {\Dtrianglepairpppp(#1)|#2|/#3`#4`#5`#6`#7/<\default,\default>}}%
\def\pplace[#1](#2,#3)[#4]{\POS(#2,#3)*+!!<0ex,\axis>!#1{#4}\ignorespaces}%
\def\cplace(#1,#2)[#3]{\POS(#1,#2)*+!!<0ex,\axis>{#3}\ignorespaces}%
\def\pullback#1]#2]{\square#1]\trident#2]\ignorespaces}%
\def\tridentppp|#1#2#3|/#4`#5`#6/<#7,#8>[#9]{%
\def\next[##1;##2`##3`##4]{\def\nodee{##1}\def\labele{##2}%
   \def\labelf{##3}\def\labelg{##4}}%
\next[#9]%
\advance \xpos by -\deltax%
\advance \xpos by -#7\advance \ypos by #8%
\advance\deltax by #7%
\morphism(\xpos,\ypos)|#1|/{#4}/<\deltax,-#8>[\nodee`\nodeb;\labele]%
\advance\deltax by -#7%
\morphism(\xpos,\ypos)|#2|/{#5}/<#7,-#8>[\nodee`\nodea;\labelf]%
\advance\deltay by #8%
\morphism(\xpos,\ypos)|#3|/{#6}/<#7,-\deltay>[\nodee`\nodec;\labelg]%
\ignorespaces}%
\def\trident{\ifnextchar|{\tridentp}{\tridentp|amb|}}%
\def\tridentp|#1|{\ifnextchar/{\tridentpp|#1|}{\tridentpp|#1|/{>}`{>}`{>}/}}%
\def\tridentpp|#1|/#2/{\ifnextchar<{\tridentppp|#1|/#2/}%
  {\tridentppp|#1|/#2/<500,500>}}%
\def\setmorphismwidth#1#2#3#4{%
 \setbox0=\hbox{$#1{\labelstyle#3#3}#2$}#4=\wd0%
 \divide #4 by 2 \divide #4 by \ul%
 \advance #4 by 350 \ratchet{#4}{500}}%
\def\setSquarewidth[#1`#2`#3`#4;#5`#6`#7`#8]{%
 \setmorphismwidth{#1}{#2}{#5}{\topw}%
 \setmorphismwidth{#3}{#4}{#8}{\botw}%
\ratchet{\topw}{\botw}}%
\def\Squarepppp(#1)|#2|/#3/<#4>[#5]{%
 \setSquarewidth[#5]%
 \squarepppp(#1)|#2|/#3/<\topw,#4>[#5]%
\ignorespaces}%
\def\Squarep(#1){\ifnextchar|{\Squarepp(#1)}{\Squarepp(#1)|alrb|}}%
\def\Squarepp(#1)|#2|{\ifnextchar/{\Squareppp(#1)|#2|}%
    {\Squareppp(#1)|#2|/>`>`>`>/}}%
\def\Squareppp(#1)|#2|/#3`#4`#5`#6/{%
    \ifnextchar<{\Squarepppp(#1)|#2|/#3`#4`#5`#6/}%
    {\Squarepppp(#1)|#2|/#3`#4`#5`#6/<\default>}}%
\def\hsquarespppp(#1,#2)|#3|/#4/<#5>[#6;#7]{%
\Xpos=#1\Ypos=#2%
\def\next|##1##2##3##4##5##6##7|{%
 \def\Xa{##1}\def\Xb{##2}\def\Xc{##3}\def\Xd{##4}%
 \def\Xe{##5}\def\Xf{##6}\def\Xg{##7}}%
\next|#3|%
\def\next<##1,##2,##3>{\deltaX=##1\deltaXprime=##2\deltaY=##3}%
\next<#5>%
\def\next[##1`##2`##3`##4`##5`##6]{%
 \def\Nodea{##1}\def\Nodeb{##2}\def\Nodec{##3}%
 \def\Noded{##4}\def\Nodee{##5}\def\Nodef{##6}}%
\next[#6]%
\def\next[##1`##2`##3`##4`##5`##6`##7]{%
 \def\Labela{##1}\def\Labelb{##2}\def\Labelc{##3}\def\Labeld{##4}%
 \def\Labele{##5}\def\Labelf{##6}\def\Labelg{##7}}%
\next[#7]%
\dohsquares/#4/}%
\def\dohsquares/#1`#2`#3`#4`#5`#6`#7/{%
\squarepppp(\Xpos,\Ypos)|\Xa\Xc\Xd\Xf|/#1`#3`#4`#6/<\deltaX,\deltaY>%
 [\Nodea`\Nodeb`\Noded`\Nodee;\Labela`\Labelc`\Labeld`\Labelf]%
 \advance \Xpos by \deltaX%
\squarepppp(\Xpos,\Ypos)|\Xb\Xd\Xe\Xg|/#2``#5`#7/<\deltaXprime,\deltaY>%
[\Nodeb`\Nodec`\Nodee`\Nodef;\Labelb``\Labele`\Labelg]%
\ignorespaces}%
\def\hsquaresp(#1){\ifnextchar|{\hsquarespp(#1)}{\hsquarespp%
(#1)|aalmrbb|}}%
\def\hsquarespp(#1)|#2|{\ifnextchar/{\hsquaresppp(#1)|#2|}%
    {\hsquaresppp(#1)|#2|/>`>`>`>`>`>`>/}}%
\def\hsquaresppp(#1)|#2|/#3/{%
    \ifnextchar<{\hsquarespppp(#1)|#2|/#3/}%
    {\hsquarespppp(#1)|#2|/#3/<\default,\default,\default>}}%
\def\hSquarespppp(#1,#2)|#3|/#4/<#5>[#6;#7]{%
\Xpos=#1\Ypos=#2%
\def\next|##1##2##3##4##5##6##7|{%
 \def\Xa{##1}\def\Xb{##2}\def\Xc{##3}\def\Xd{##4}%
 \def\Xe{##5}\def\Xf{##6}\def\Xg{##7}}%
\next|#3|%
\deltaY=#5%
\def\next[##1`##2`##3`##4`##5`##6]{%
 \def\Nodea{##1}\def\Nodeb{##2}\def\Nodec{##3}%
 \def\Noded{##4}\def\Nodee{##5}\def\Nodef{##6}}%
\next[#6]%
\def\next[##1`##2`##3`##4`##5`##6`##7]{%
 \def\Labela{##1}\def\Labelb{##2}\def\Labelc{##3}\def\Labeld{##4}%
 \def\Labele{##5}\def\Labelf{##6}\def\Labelg{##7}}%
\next[#7]%
\dohSquares/#4/}%
\def\dohSquares/#1`#2`#3`#4`#5`#6`#7/{%
\Squarepppp(\Xpos,\Ypos)|\Xa\Xc\Xd\Xf|/#1`#3`#4`#6/<\deltaY>%
 [\Nodea`\Nodeb`\Noded`\Nodee;\Labela`\Labelc`\Labeld`\Labelf]%
 \advance \Xpos by \topw%
\Squarepppp(\Xpos,\Ypos)|\Xb\Xd\Xe\Xg|/#2``#5`#7/<\deltaY>%
[\Nodeb`\Nodec`\Nodee`\Nodef;\Labelb``\Labele`\Labelg]%
\ignorespaces}%
\def\hSquaresp(#1){\ifnextchar|{\hSquarespp(#1)}{\hSquarespp%
(#1)|aalmrbb|}}%
\def\hSquarespp(#1)|#2|{\ifnextchar/{\hSquaresppp(#1)|#2|}%
    {\hSquaresppp(#1)|#2|/>`>`>`>`>`>`>/}}%
\def\hSquaresppp(#1)|#2|/#3/{%
    \ifnextchar<{\hSquarespppp(#1)|#2|/#3/}%
    {\hSquarespppp(#1)|#2|/#3/<\default>}}%
\def\vsquarespppp(#1,#2)|#3|/#4/<#5>[#6;#7]{%
\Xpos=#1\Ypos=#2%
\def\next|##1##2##3##4##5##6##7|{%
 \def\Xa{##1}\def\Xb{##2}\def\Xc{##3}\def\Xd{##4}%
 \def\Xe{##5}\def\Xf{##6}\def\Xg{##7}}%
\next|#3|%
\def\next<##1,##2,##3>{\deltaX=##1\deltaY=##2\deltaYprime=##3}%
\next<#5>%
\def\next[##1`##2`##3`##4`##5`##6]{%
 \def\Nodea{##1}\def\Nodeb{##2}\def\Nodec{##3}%
 \def\Noded{##4}\def\Nodee{##5}\def\Nodef{##6}}%
\next[#6]%
\def\next[##1`##2`##3`##4`##5`##6`##7]{%
 \def\Labela{##1}\def\Labelb{##2}\def\Labelc{##3}\def\Labeld{##4}%
 \def\Labele{##5}\def\Labelf{##6}\def\Labelg{##7}}%
\next[#7]%
\dovsquares/#4/}%
\def\dovsquares/#1`#2`#3`#4`#5`#6`#7/{%
\squarepppp(\Xpos,\Ypos)|\Xd\Xe\Xf\Xg|/`#5`#6`#7/<\deltaX,\deltaYprime>%
[\Nodec`\Noded`\Nodee`\Nodef;`\Labele`\Labelf`\Labelg]%
 \advance\Ypos by \deltaYprime%
\squarepppp(\Xpos,\Ypos)|\Xa\Xb\Xc\Xd|/#1`#2`#3`#4/<\deltaX,\deltaY>%
 [\Nodea`\Nodeb`\Nodec`\Noded;\Labela`\Labelb`\Labelc`\Labeld]%
\ignorespaces}%
\def\vsquaresp(#1){\ifnextchar|{\vsquarespp(#1)}{\vsquarespp%
(#1)|aalmrbb|}}%
\def\vsquarespp(#1)|#2|{\ifnextchar/{\vsquaresppp(#1)|#2|}%
    {\vsquaresppp(#1)|#2|/>`>`>`>`>`>`>/}}%
\def\vsquaresppp(#1)|#2|/#3/{%
    \ifnextchar<{\vsquarespppp(#1)|#2|/#3/}%
    {\vsquarespppp(#1)|#2|/#3/<\default,\default,\default>}}%
\def\vSquarespppp(#1,#2)|#3|/#4/<#5,#6>[#7;#8]{%
\Xpos=#1\Ypos=#2%
\def\next|##1##2##3##4##5##6##7|{%
 \def\Xa{##1}\def\Xb{##2}\def\Xc{##3}\def\Xd{##4}%
 \def\Xe{##5}\def\Xf{##6}\def\Xg{##7}}%
\next|#3|%
\deltaX=#5%
\deltaY=#6%
\def\next[##1`##2`##3`##4`##5`##6]{%
 \def\Nodea{##1}\def\Nodeb{##2}\def\Nodec{##3}%
 \def\Noded{##4}\def\Nodee{##5}\def\Nodef{##6}}%
\next[#7]%
\def\next[##1`##2`##3`##4`##5`##6`##7]{%
 \def\Labela{##1}\def\Labelb{##2}\def\Labelc{##3}\def\Labeld{##4}%
 \def\Labele{##5}\def\Labelf{##6}\def\Labelg{##7}}%
\next[#8]%
\dovSquares/#4/\ignorespaces}%
\def\dovSquares/#1`#2`#3`#4`#5`#6`#7/{%
\setmorphismwidth{\Nodea}{\Nodeb}{\Labela}{\topw}%
\setmorphismwidth{\Nodec}{\Noded}{\Labeld}{\botw}%
\ratchet{\topw}{\botw}%
\setmorphismwidth{\Nodee}{\Nodef}{\Labelg}{\botw}%
\ratchet{\topw}{\botw}%
\square(\Xpos,\Ypos)|\Xd\Xe\Xf\Xg|/`#5`#6`#7/<\topw,\deltaX>%
 [\Nodec`\Noded`\Nodee`\Nodef;`\Labele`\Labelf`\Labelg]%
\advance \Ypos by \deltaX%
\square(\Xpos,\Ypos)|\Xa\Xb\Xc\Xd|/#1`#2`#3`#4/<\topw,\deltaY>%
 [\Nodea`\Nodeb`\Nodec`\Noded;\Labela`\Labelb`\Labelc`\Labeld]%
}%
\def\vSquaresp(#1){\ifnextchar|{\vSquarespp(#1)}{\vSquarespp%
(#1)|alrmlrb|}}%
\def\vSquarespp(#1)|#2|{\ifnextchar/{\vSquaresppp(#1)|#2|}%
    {\vSquaresppp(#1)|#2|/>`>`>`>`>`>`>/}}%
\def\vSquaresppp(#1)|#2|/#3/{%
    \ifnextchar<{\vSquarespppp(#1)|#2|/#3/}%
    {\vSquarespppp(#1)|#2|/#3/<\default,\default>}}%
\def\osquarepppp(#1)|#2|/#3`#4`#5`#6/<#7>[#8]{\squarepppp%
 (#1)|#2|/#3`#4`#5`#6/<#7>[#8]%
 \let\Nodea\nodea\let\Nodeb\nodeb%
\let\Nodec\nodec\let\Noded\noded\Xpos=\xpos\Ypos=\ypos%
\deltaX=\deltax \deltaY=\deltay \isquare}%
\def\osquarep(#1){\ifnextchar|{\osquarepp(#1)}{\osquarepp(#1)|alrb|}}%
\def\osquarepp(#1)|#2|{\ifnextchar/{\osquareppp(#1)|#2|}%
    {\osquareppp(#1)|#2|/>`>`>`>/}}%
\def\osquareppp(#1)|#2|/#3`#4`#5`#6/{%
    \ifnextchar<{\osquarepppp(#1)|#2|/#3`#4`#5`#6/}%
    {\osquarepppp(#1)|#2|/#3`#4`#5`#6/<1500,1500>}}%
\def\isquarepppp(#1)|#2|/#3`#4`#5`#6/<#7>[#8]{%
 \squarepppp(#1)|#2|/#3`#4`#5`#6/<#7>[#8]%
\ifnextchar|{\cubep}{\cubep|mmmm|}}%
\def\cubep|#1|{\ifnextchar/{\cubepp|#1|}{\cubepp|#1|/>`>`>`>/}}%
\def\isquare{\ifnextchar({\isquarep}{\isquarep(\default,\default)}}%
\def\isquarep(#1){\ifnextchar|{\isquarepp(#1)}{\isquarepp(#1)|alrb|}}%
\def\isquarepp(#1)|#2|{\ifnextchar/{\isquareppp(#1)|#2|}%
    {\isquareppp(#1)|#2|/>`>`>`>/}}%
\def\isquareppp(#1)|#2|/#3`#4`#5`#6/{%
    \ifnextchar<{\isquarepppp(#1)|#2|/#3`#4`#5`#6/}%
    {\isquarepppp(#1)|#2|/#3`#4`#5`#6/<500,500>}}%
\def\cubepp|#1#2#3#4|/#5`#6`#7`#8/[#9]{%
\def\next[##1`##2`##3`##4]{\gdef\Labela{##1}%
\gdef\Labelb{##2}\gdef\Labelc{##3}\gdef\Labeld{##4}}\next[#9]%
\xend\xpos \yend\ypos%
\Xend\xend\advance\Xend by -\Xpos%
\Yend\yend\advance\Yend by -\Ypos%
\domorphism(\Xpos,\Ypos)|#2|/#6/<\Xend,\Yend>[\Nodeb`\nodeb;\Labelb]%
\advance\Xpos by-\deltaX%
\advance\xend by-\deltax%
\Xend\xend\advance\Xend by -\Xpos%
\domorphism(\Xpos,\Ypos)|#1|/#5/<\Xend,\Yend>[\Nodea`\nodea;\Labela]%
\advance\Ypos by-\deltaY%
\advance\yend by-\deltay%
\Yend\yend\advance\Yend by -\Ypos%
\domorphism(\Xpos,\Ypos)|#3|/#7/<\Xend,\Yend>[\Nodec`\nodec;\Labelc]%
\advance\Xpos by\deltaX%
\advance\xend by\deltax%
\Xend\xend\advance\Xend by -\Xpos%
\domorphism(\Xpos,\Ypos)|#4|/#8/<\Xend,\Yend>[\Noded`\noded;\Labeld]%
\ignorespaces}%
\def\setwdth#1#2{\setbox0\hbox{$\labelstyle#1$}\wdth=\wd0%
\setbox0\hbox{$\labelstyle#2$}\ifnum\wdth<\wd0 \wdth=\wd0 \fi}%
\def\topppp/#1/<#2>^#3_#4{\:%
\ifnum#2=0%
   \setwdth{#3}{#4}\deltax=\wdth \divide \deltax by \ul%
   \advance \deltax by \defaultmargin  \ratchet{\deltax}{100}%
\else \deltax #2%
\fi%
\xy\ar@{#1}^{#3}_{#4}(\deltax,0) \endxy%
\:}%
\def\toppp/#1/<#2>^#3{\ifnextchar_{\topppp/#1/<#2>^{#3}}{\topppp/#1/<#2>^{#3}_{}}}%
\def\topp/#1/<#2>{\ifnextchar^{\toppp/#1/<#2>}{\toppp/#1/<#2>^{}}}%
\def\toop/#1/{\ifnextchar<{\topp/#1/}{\topp/#1/<0>}}%
\def\twopppp/#1`#2/<#3>^#4_#5{\:%
\ifnum0=#3%
  \setwdth{#4}{#5}\deltax=\wdth \divide \deltax by \ul \advance \deltax%
  by \defaultmargin \ratchet{\deltax}{200}%
\else \deltax#3 \fi%
\xy\ar@{#1}@<2.5pt>^{#4}(\deltax,0)%
\ar@{#2}@<-2.5pt>_{#5}(\deltax,0)\endxy\:}%
\def\twoppp/#1`#2/<#3>^#4{\ifnextchar_{\twopppp/#1`#2/<#3>^{#4}}%
  {\twopppp/#1`#2/<#3>^{#4}_{}}}%
\def\twopp/#1`#2/<#3>{\ifnextchar^{\twoppp/#1`#2/<#3>}{\twoppp/#1`#2/<#3>^{}}}%
\def\twop/#1`#2/{\ifnextchar<{\twopp/#1`#2/}{\twopp/#1`#2/<0>}}%
\def\threeppppp/#1`#2`#3/<#4>^#5|#6_#7{\:%
\ifnum0=#4%
\setbox0\hbox{$\labelstyle#5$}\wdth=\wd0%
\setbox0\hbox{$\labelstyle#6$}\ifnum\wdth<\wd0 \wdth=\wd0 \fi%
\setbox0\hbox{$\labelstyle#7$}\ifnum\wdth<\wd0 \wdth=\wd0 \fi%
\deltax=\wdth \divide \deltax by \ul \advance \deltax by%
\defaultmargin \ratchet{\deltax}{300}%
\else\deltax#4 \fi%
    \xy \ifnum\wd0=0 \ar@{#2}(\deltax,0)%
    \else \ar@{#2}|{#6}(\deltax,0)\fi%
\ar@{#1}@<4.5pt>^{#5}(\deltax,0)%
\ar@{#3}@<-4.5pt>_{#7}(\deltax,0)\endxy\:}%
\def\threepppp/#1`#2`#3/<#4>^#5|#6{\ifnextchar_{\threeppppp%
  /#1`#2`#3/<#4>^{#5}|{#6}}{\threeppppp/#1`#2`#3/<#4>^{#5}|{#6}_{}}}%
\def\threeppp/#1`#2`#3/<#4>^#5{\ifnextchar|{\threepppp%
  /#1`#2`#3/<#4>^{#5}}{\threepppp/#1`#2`#3/<#4>^{#5}|{}}}%
\def\threepp/#1`#2`#3/<#4>{\ifnextchar^{\threeppp/#1`#2`#3/<#4>}%
  {\threeppp/#1`#2`#3/<#4>^{}}}%
\def\threep/#1`#2`#3/{\ifnextchar<{\threepp/#1`#2`#3/}%
  {\threepp/#1`#2`#3/<0>}}%
\def\twoar(#1,#2){{%
 \scalefactor{0.1}%
 \deltax#1\deltay#2%
 \deltaX=\ifnum\deltax<0-\fi\deltax%
 \deltaY=\ifnum\deltay<0-\fi\deltay%
 \Xend\deltax \multiply \Xend by \deltax%
 \Yend\deltay \multiply \Yend by \deltay%
 \advance\Xend by \Yend \multiply \Xend by 3%
 \ifnum \deltaX > \deltaY%
    \multiply \deltaX by 3 \advance \deltaX by \deltaY%
 \else%
    \multiply \deltaY by 3 \advance \deltaX by \deltaY%
 \fi%
 \multiply\deltax by 500%
 \multiply\deltay by 500%
 \xpos\deltax \multiply \xpos by 3 \divide\xpos by \deltaX%
 \Xpos\deltax \multiply \Xpos by \deltaX \divide \Xpos by \Xend%
 \advance \xpos by \Xpos%
 \ypos\deltay \multiply \ypos by 3 \divide\ypos by \deltaX%
 \Ypos\deltay \multiply \Ypos by \deltaX \divide \Ypos by \Xend%
 \advance \ypos by \Ypos%
 \xy \ar@{=>}(\xpos,\ypos) \endxy%
}\ignorespaces}%
\def\iiixiiipppppp(#1,#2)|#3|/#4/<#5>#6<#7>[#8;#9]{%
 \xpos#1\ypos#2\relax%
 \def\next|##1##2##3##4##5##6##7|{\def\xa{##1}\def\xb{##2}%
 \def\xc{##3}\def\xd{##4}\def\xe{##5}\def\xf{##6}\nextt|##7|}%
 \def\nextt|##1##2##3##4##5##6|{\def\xg{##1}\def\xh{##2}%
 \def\xi{##3}\def\xj{##4}\def\xk{##5}\def\xl{##6}}%
 \next|#3|%
 \def\next<##1,##2>{\deltax##1\deltay##2}%
 \next<#5>%
 \def\next<##1,##2>{\deltaX##1\deltaY##2}%
 \next<#7>%
 \def\next##1{\topw##1\relax%
 \ifodd\topw \def\za{}\else\def\za{\relax}\fi \divide\topw by 2
 \ifodd\topw \def\zb{}\else\def\zb{\relax}\fi \divide\topw by 2
 \ifodd\topw \def\zc{}\else\def\zc{\relax}\fi \divide\topw by 2
 \ifodd\topw \def\zd{}\else\def\zd{\relax}\fi \divide\topw by 2
 \ifodd\topw \def\ze{}\else\def\ze{\relax}\fi \divide\topw by 2
 \ifodd\topw \def\zf{}\else\def\zf{\relax}\fi \divide\topw by 2
 \ifodd\topw \def\zg{}\else\def\zg{\relax}\fi \divide\topw by 2
 \ifodd\topw \def\zh{}\else\def\zh{\relax}\fi \divide\topw by 2
 \ifodd\topw \def\zi{}\else\def\zi{\relax}\fi \divide\topw by 2
 \ifodd\topw \def\zj{}\else\def\zj{\relax}\fi \divide\topw by 2
 \ifodd\topw \def\zk{}\else\def\zk{\relax}\fi \divide\topw by 2
 \ifodd\topw \def\zl{}\else\def\zl{\relax}\fi}%
 \next{#6}%
 \def\next[##1`##2`##3`##4`##5`##6`##7`##8`##9]{%
 \def\nodeA{##1}\def\nodeB{##2}\def\nodeC{##3}%
 \def\nodeD{##4}\def\nodeE{##5}\def\nodeF{##6}%
 \def\nodeG{##7}\def\nodeH{##8}\def\nodeI{##9}}%
 \next[#8]%
 \def\next[##1`##2`##3`##4`##5`##6`##7]{%
 \def\labela{##1}\def\labelb{##2}\def\labelc{##3}%
 \def\labeld{##4}\def\labele{##5}\def\labelf{##6}\nextt[##7]}%
 \def\nextt[##1`##2`##3`##4`##5`##6]{%
 \def\labelg{##1}\def\labelh{##2}\def\labeli{##3}%
 \def\labelj{##4}\def\labelk{##5}\def\labell{##6}}%
 \next[#9]%
 \def\next/##1`##2`##3`##4`##5`##6`##7`##8/{%
 \advance\ypos\deltay
    \ifx\zf\empty \morphism(\xpos,\ypos)/<-/<-\deltaX,0>[\nodeD`0;]\fi
 \morphism(\xpos,\ypos)|\xf|/{##6}/<\deltax,0>[\nodeD`\nodeE;\labelf]%
    \advance \xpos\deltax
    \morphism(\xpos,\ypos)|\xg|/{##7}/<\deltax,0>[\nodeE`\nodeF;\labelg]%
    \ifx\zg\empty \advance\xpos \deltax
        \morphism(\xpos,\ypos)<\deltaX,0>[\nodeF`0;]\fi
    \xpos#1 \advance\ypos\deltay
    \ifx\zd\empty \morphism(\xpos,\ypos)/<-/<-\deltaX,0>[\nodeA`0;]\fi
    \ifx\za\empty \morphism(\xpos,\ypos)/<-/<0,\deltaY>[\nodeA`0;]\fi
    \morphism(\xpos,\ypos)|\xa|/{##1}/<\deltax,0>[\nodeA`\nodeB;\labela]%
 \morphism(\xpos,\ypos)|\xc|/{##3}/<0,-\deltay>[\nodeA`\nodeD;\labelc]%
    \advance \xpos\deltax
     \morphism(\xpos,\ypos)|\xb|/{##2}/<\deltax,0>[\nodeB`\nodeC;\labelb]%
     \morphism(\xpos,\ypos)|\xd|/{##4}/<0,-\deltay>[\nodeB`\nodeE;\labeld]%
     \ifx\zb\empty \morphism(\xpos,\ypos)/<-/<0,\deltaY>[\nodeB`0;]\fi
     \advance\xpos\deltax
 \morphism(\xpos,\ypos)|\xd|/{##5}/<0,-\deltay>[\nodeC`\nodeF;\labele]%
     \ifx\zc\empty \morphism(\xpos,\ypos)/<-/<0,\deltaY>[\nodeC`0;]\fi
     \ifx\ze\empty \morphism(\xpos,\ypos)<\deltaX,0>[\nodeC`0;]\fi
   \nextt/##8/}%
 \def\nextt/##1`##2`##3`##4`##5/{%
 \xpos#1\ypos#2\relax%
   \ifx\zh\empty \morphism(\xpos,\ypos)/<-/<-\deltaX,0>[\nodeG`0;]\fi
   \ifx\zj\empty \morphism(\xpos,\ypos)<0,-\deltaY>[\nodeG`0;]\fi
   \morphism(\xpos,\ypos)|\xk|/{##4}/<\deltax,0>[\nodeG`\nodeH;\labelk]%
   \advance\xpos\deltax
   \morphism(\xpos,\ypos)|\xl|/{##5}/<\deltax,0>[\nodeH`\nodeI;\labell]%
   \ifx\zk\empty \morphism(\xpos,\ypos)<0,-\deltaY>[\nodeH`0;]\fi
   \advance\xpos\deltax
   \ifx\zi\empty \morphism(\xpos,\ypos)<\deltaX,0>[\nodeI`0;]\fi
   \ifx\zl\empty \morphism(\xpos,\ypos)<0,-\deltaY>[\nodeI`0;]\fi
   \xpos#1 \advance\ypos\deltay
    \morphism(\xpos,\ypos)|\xh|/{##1}/<0,-\deltay>[\nodeD`\nodeG;\labelh]%
    \advance \xpos\deltax
    \morphism(\xpos,\ypos)|\xi|/{##2}/<0,-\deltay>[\nodeE`\nodeH;\labeli]%
    \advance \xpos\deltax
 \morphism(\xpos,\ypos)|\xj|/{##3}/<0,-\deltay>[\nodeF`\nodeI;\labelj]}%
 \next/#4/\ignorespaces}%
\def\iiixiiip(#1){\ifnextchar|{\iiixiiipp(#1)}%
  {\iiixiiipp(#1)|aalmrmmlmrbb|}}%
\def\iiixiiipp(#1)|#2|{\ifnextchar/{\iiixiiippp(#1)|#2|}%
    {\iiixiiippp(#1)|#2|/>`>`>`>`>`>`>`>`>`>`>`>/}}%
\def\iiixiiippp(#1)|#2|/#3/{%
    \ifnextchar<{\iiixiiipppp(#1)|#2|/#3/}%
    {\iiixiiipppp(#1)|#2|/#3/<\default,\default>}}%
\def\iiixiiipppp(#1)|#2|/#3/<#4>{\ifnextchar[{\iiixiiippppp(#1)|#2|/#3/%
   <#4>0<0,0>}{\iiixiiippppp(#1)|#2|/#3/<#4>}}%
\def\iiixiiippppp(#1)|#2|/#3/<#4>#5{\ifnextchar<%
   {\iiixiiipppppp(#1)|#2|/#3/<#4>{#5}}%
   {\iiixiiipppppp(#1)|#2|/#3/<#4>{#5}<400,400>}}%
\def\iiixiipppppp(#1,#2)|#3|/#4/<#5>#6<#7>[#8;#9]{%
 \xpos#1\ypos#2\relax%
 \def\next|##1##2##3##4##5##6##7|{\def\xa{##1}\def\xb{##2}%
 \def\xc{##3}\def\xd{##4}\def\xe{##5}\def\xf{##6}\def\xg{##7}}%
 \next|#3|%
 \def\next<##1,##2>{\deltax##1\deltay##2}%
 \next<#5>%
 \deltaX#7
 \topw#6
 \def\next{%
 \ifodd\topw \def\za{}\else\def\za{\relax}\fi \divide\topw by 2
 \ifodd\topw \def\zb{}\else\def\zb{\relax}\fi \divide\topw by 2
 \ifodd\topw \def\zc{}\else\def\zc{\relax}\fi \divide\topw by 2
 \ifodd\topw \def\zd{}\else\def\zd{\relax}\fi}%
 \next%
 \def\next[##1`##2`##3`##4`##5`##6]{%
 \def\nodea{##1}\def\nodeb{##2}\def\nodec{##3}%
 \def\noded{##4}\def\nodee{##5}\def\nodef{##6}}%
 \next[#8]%
 \def\next[##1`##2`##3`##4`##5`##6`##7]{%
 \def\labela{##1}\def\labelb{##2}\def\labelc{##3}%
 \def\labeld{##4}\def\labele{##5}\def\labelf{##6}\def\labelg{##7}}%
 \next[#9]%
 \def\next/##1`##2`##3`##4`##5`##6`##7/{%
 {\ifx\zc\empty\advance\xpos -\deltaX
\relax\morphism(\xpos,\ypos)<\deltaX,0>[0`\noded;]\fi}%
 \morphism(\xpos,\ypos)|\xf|/##6/<\deltax,0>[\noded`\nodee;\labelf]%
 \advance\xpos by \deltax%
 \morphism(\xpos,\ypos)|\xg|/##7/<\deltax,0>[\nodee`\nodef;\labelg]%
 {\ifx\zd\empty \advance\xpos by \deltax
\relax  \morphism(\xpos,\ypos)<\deltaX,0>[\nodef`0;]\fi}%
 \advance\xpos by -\deltax  \advance\ypos by \deltay
 {\ifx\za\empty\advance \xpos by -\deltaX
\relax\morphism(\xpos,\ypos)<\deltaX,0>[0`\nodea;]\fi}%
 \morphism(\xpos,\ypos)|\xa|/##1/<\deltax,0>[\nodea`\nodeb;\labela]%
 \morphism(\xpos,\ypos)|\xc|/##3/<0,-\deltay>[\nodea`\noded;\labelc]%
 \advance\xpos by \deltax%
 \morphism(\xpos,\ypos)|\xb|/##2/<\deltax,0>[\nodeb`\nodec;\labelb]%
 \morphism(\xpos,\ypos)|\xd|/##4/<0,-\deltay>[\nodeb`\nodee;\labeld]%
 \advance\xpos by \deltax%
 \morphism(\xpos,\ypos)|\xe|/##5/<0,-\deltay>[\nodec`\nodef;\labele]%
 \ifx\zb\empty\relax \morphism(\xpos,\ypos)<\deltaX,0>[\nodec`0;]\fi}%
 \next/#4/\ignorespaces}%
\def\iiixiip(#1){\ifnextchar|{\iiixiipp(#1)}%
  {\iiixiipp(#1)|aalmrbb|}}%
\def\iiixiipp(#1)|#2|{\ifnextchar/{\iiixiippp(#1)|#2|}%
    {\iiixiippp(#1)|#2|/>`>`>`>`>`>`>/}}%
\def\iiixiippp(#1)|#2|/#3/{%
    \ifnextchar<{\iiixiipppp(#1)|#2|/#3/}%
    {\iiixiipppp(#1)|#2|/#3/<\default,\default>}}%
\def\iiixiipppp(#1)|#2|/#3/<#4>{\ifnextchar[{\iiixiippppp(#1)|#2|/#3/%
   <#4>{0}<0>}{\iiixiippppp(#1)|#2|/#3/<#4>}}%
\def\iiixiippppp(#1)|#2|/#3/<#4>#5{\ifnextchar<%
   {\iiixiipppppp(#1)|#2|/#3/<#4>{#5}}%
   {\iiixiipppppp(#1)|#2|/#3/<#4>{#5}<400>}}%
\def\node#1(#2,#3)[#4]{%
\expandafter\gdef\csname x@#1\endcsname{#2}%
\expandafter\gdef\csname y@#1\endcsname{#3}%
\expandafter\gdef\csname ob@#1\endcsname{#4}%
\ignorespaces}%
\def\arrow{\ifnextchar|{\arrowp}{\arrowp|a|}}%
\def\arrowp|#1|{\ifnextchar/{\arrowpp|#1|}{\arrowpp|#1|/>/}}%
\def\arrowpp|#1|/#2/[#3`#4;#5]{%
\xfinish=\csname x@#4\endcsname%
\yfinish=\csname y@#4\endcsname%
\advance\xfinish by -\csname x@#3\endcsname%
\advance\yfinish by -\csname y@#3\endcsname%
\morphism(\csname x@#3\endcsname,\csname y@#3\endcsname)|#1|/{#2}/%
<\xfinish,\yfinish>[\csname ob@#3\endcsname`\csname ob@#4\endcsname;#5]%
}%
\def\Loop(#1,#2)#3(#4,#5){\POS(#1,#2)*+!!<0ex,\axis>{#3}\ar@(#4,#5)}%
\def\iloop#1(#2,#3){\xy\Loop(0,0)#1(#2,#3)\endxy}%
\newbox\cdotbox
     \let \PATHafterPOS\PATHafterPOS@default%
     \let \arsavedPATHafterPOS@@\relax%
     \let\afterar@@\relax%
\xydef@\endxyobj{\if\inxy@\else\xyerror@{Unexpected \string\endxy}{}\fi%
>  \relax%
>   \dimen@=\Y@max \advance\dimen@-\Y@min%
>   \ifdim\dimen@<\z@ \dimen@=\z@ \Y@min=\z@ \Y@max=\z@ \fi%
>   \dimen@=\X@max \advance\dimen@-\X@min%
>   \ifdim\dimen@<\z@ \dimen@=\z@ \X@min=\z@ \X@max=\z@ \fi%
>   \edef\tmp@{\egroup%
>     \setboxz@h{\kern-\the\X@min \boxz@}%
>     \ht\z@=\the\Y@max \dp\z@=-\the\Y@min \wdz@=\the\dimen@%
>     \noexpand\maybeunraise@ \raise\dimen@\boxz@%
>     \noexpand\recoverXyStyle@ \egroup \noexpand\xy@end%
>     \U@c=\the\Y@max \advance\U@c-\the\Y@c%
>     \D@c=-\the\Y@min \advance\D@c\the\Y@c%
>     \L@c=-\the\X@min  \advance\L@c\the\X@c%
>     \R@c=\the\X@max  \advance\R@c-\the\X@c%
>    }\tmp@}%
\gdef\xymerge@MinMax{}%
\xydef@\twocell{\hbox\bgroup\xysave@MinMax\@twocell}%
\xydef@\uppertwocell{\hbox\bgroup\xysave@MinMax\@uppertwocell}%
\xydef@\lowertwocell{\hbox\bgroup\xysave@MinMax\@lowertwocell}%
\xydef@\compositemap{\hbox\bgroup\xysave@MinMax\@compositemap}%
\xydef@\xysave@MinMax{\xdef\xymerge@MinMax{%
   \noexpand\ifdim\X@max<\the\X@max \X@max=\the\X@max\noexpand\fi%
   \noexpand\ifdim\X@min>\the\X@min \X@min=\the\X@min\noexpand\fi%
   \noexpand\ifdim\Y@max<\the\Y@max \Y@max=\the\Y@max\noexpand\fi%
   \noexpand\ifdim\Y@min>\the\Y@min \Y@min=\the\Y@min\noexpand\fi%
  }}%
\xydef@\drop@Twocell{\boxz@ \xymerge@MinMax}%
\xydef@\twocell@DONE{%
  \edef\tmp@{\egroup%
   \X@min=\the\X@min \X@max=\the\X@max%
   \Y@min=\the\Y@min \Y@max=\the\Y@max}\tmp@%
  \L@c=\X@c \advance\L@c-\X@min \R@c=\X@max \advance\R@c-\X@c%
  \D@c=\Y@c \advance\D@c-\Y@min \U@c=\Y@max \advance\U@c-\Y@c%
  \ht\z@=\U@c \dp\z@=\D@c \dimen@=\L@c \advance\dimen@\R@c \wdz@=\dimen@%
  \computeLeftUpness@%
  \setboxz@h{\kern-\X@p \raise-\Y@c\boxz@ }%
  \dimen@=\L@c \advance\dimen@\R@c \wdz@=\dimen@ \ht\z@=\U@c \dp\z@=\D@c%
  \Edge@c={\rectangleEdge}\Invisible@false \Hidden@false%
  \edef\Drop@@{\noexpand\drop@Twocell%
   \noexpand\def\noexpand\Leftness@{\Leftness@}%
   \noexpand\def\noexpand\Upness@{\Upness@}}%
  \edef\Connect@@{\noexpand\connect@Twocell%
   \noexpand\ifdim\X@max<\the\X@max \X@max=\the\X@max\noexpand\fi%
   \noexpand\ifdim\X@min>\the\X@min \X@min=\the\X@min\noexpand\fi%
   \noexpand\ifdim\Y@max<\the\Y@max \Y@max=\the\Y@max\noexpand\fi%
   \noexpand\ifdim\Y@min>\the\Y@min \Y@min=\the\Y@min\noexpand\fi }%
  \xymerge@MinMax%
}%
\DeclareMathOperator{\colim}{colim}
\newcommand{\C}{{\mathbb C}}
\newcommand{\N}{{\mathbb N}}
\newcommand{\fdhilb}{{\bf fdhilb}}
\newcommand{\Ban}{{\bf Ban}}
\newcommand{\Vect}{{\bf Vect}}
\newcommand{\Bil}{{\bf Bil}}
\renewcommand{\C}{\mathbb C}
\newcommand{\lbracky}{\left[}
\newcommand{\rbracky}{\right]}
\renewcommand{\leq}{\leqslant}
\newcommand{\too}{\longrightarrow}
\newcommand{\oto}[1]{\stackrel{\underrightarrow{\stackrel{#1}{\,\,\,\,\hphantom{\too}}}}{}}  
\theoremstyle{definition}
\newtheorem{cor}{Corollary}
\newtheorem{prop}{Proposition}
\newtheorem{lemma}{Lemma}
\author{\sc Brian Day\footnote{Department of Mathematics, Macquarie University, NSW 2109, Australia.}}
\date{April 24, 2009}
\title{A *-autonomous Category of Banach Spaces -- Corrected}
\begin{document}

\maketitle

\begin{abstract}

We describe a $\C$-linear additive *-autonomous category of Banach spaces.
Please note that a correction has been appended to the original version 1 which is maintained here for reference. Also,
a proposed example of a *-autonomous category of topological $\C$-linear spaces has been added to version 2.

\end{abstract}

\section{Introduction}

First, we describe an elementary *-autonomous monoidal category: \[ \Ban( \mathbb N ) \subset \Ban \] where $\Ban$ is the usual symmetric monoidal
closed category of complex Banach spaces and linear contractions, and $\Ban( \mathbb N )$ is the replete full subcategory of $\Ban$ determined
by the Hilbert spaces $\ell_2(X)$ for $X \in \{ 0,1,2, \ldots, \mathbb N\}$. We also consider the additive aspects of the corresponding
$\mathbb C$-linearisations: \[ \mathbb C \Ban( \mathbb N ) \subset \mathbb C \Ban \]

The resulting $\mathbb C$-linear and additive *-autonomous category $\mathbb C \Ban (\mathbb N)$ is analagous to the ``completion'' of any
(symmetric) compact closed category to a *-autonomous monoidal category by adding both an object-at-zero and an object-at-infinity. In this case,
the object-at-zero is $\ell_2(0)$, while the object-at-infinity is $\ell_2(\mathbb N)$. Another example is the multiplicative group of positive
real numbers with both $0$ and $\infty$ adjoined, which similarly forms a *-autonomous category (with $r \otimes \infty = \infty$ for $r \neq 0$
and $0 \otimes \infty = 0$). 

This article probably contains nothing that is essentially new.

\section{The *-autonomous Category $\mathbb C \Ban(\mathbb N)$}

First we note that a norm-decreasing linear map: \[ \bigoplus^n \mathbb C \too B \qquad (n \mbox{ finite}) \]
corresponds to a set $\{b_1, b_2, \ldots, b_n\}$ of $n$ points in $B$ such that: \[ ||b_1 + b_2 + \cdots + b_n||^2 \leq n \]
Then we deduce that the canonical map: \[ \bigoplus^n \mathbb C \too \left[ B, \bigoplus^n B \right] \] is norm-decreasing and so gives a map:
\[ \left( \bigoplus^n \mathbb C \right) \otimes B \too \bigoplus^n B \] for all Banach spaces $B$.

In particular, taking $B = \bigoplus^m \mathbb C$, we obtain: \[ \mathbb C^n \otimes \mathbb C^m \too \mathbb C^{n \times m} \] if we write 
$\mathbb C^n = \bigoplus^n \mathbb C$, etc.

We also obtain a natural transformation $\alpha_B$: \[ \bfig

\node a(-1000,0)[\Ban(\C^n, \lbracky \C^m, B \rbracky)]
\node b(0,0)[\Ban(\C^{n \times m}, B)]
\node c(0,-500)[\Bil(\C^n \times \C^m, B)]

\arrow[a`b;\alpha_B]
\arrow[b`c;]
\arrow/>->/[a`c;]

\efig \]
which is thus a monomorphism. So, by the Yoneda lemma, and the natural isomorphism: 
\[ \Ban\left(\C^n \otimes \C^m,B\right) \cong \Ban\left(\C^n, \left[\C^m,B\right]\right) \] we have an epimorphism: \[ \C^{n \times m} \too
\C^n \otimes \C^m \] in $\Ban$. But $\C^n \otimes \C^m$ is a topological completion of the algebraic tensor product of $\C^n$ and $\C^m$. Hence
we have:

\begin{prop} $\C^n \otimes \C^m \oto{\cong} \C^{n \times m}$ in $\Ban$\end{prop}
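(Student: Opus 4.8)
The plan is to exhibit an explicit two-sided inverse for the canonical map and then use the fact that, once we are in finite dimensions, a contraction whose inverse is also a contraction is automatically an isometry. Write $\phi\colon\C^n\otimes\C^m\to\C^{n\times m}$ for the canonical map built above (the adjunct at $B=\C^m$ of the norm-decreasing map $\bigoplus^n\C\to[B,\bigoplus^n B]$), and $\psi\colon\C^{n\times m}\to\C^n\otimes\C^m$ for the epimorphism produced from the Yoneda lemma and the natural monomorphism $\alpha_\bullet$. Both are morphisms of $\Ban$, i.e. linear contractions; the claim will follow once we show $\phi\psi=\id$ and $\psi\phi=\id$.

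First I would reduce to linear algebra using the remark just before the statement: $\C^n\otimes\C^m$ is a completion of the algebraic tensor product $\C^n\otimes_{\mathrm{alg}}\C^m$, and the latter is a normed space of finite dimension $n\times m$, hence already complete. So $\C^n\otimes\C^m=\C^n\otimes_{\mathrm{alg}}\C^m$ as objects of $\Ban$, with vector-space basis $\{e_i\otimes e_j\}$; likewise $\{e_{(i,j)}\}$ is the standard basis of $\C^{n\times m}$. It is therefore enough to check the two composites on these bases.

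Next I would read off $\phi$ and $\psi$ on basis vectors. From the description of the canonical map $\big(\bigoplus^n\C\big)\otimes B\to\bigoplus^n B$, $\lambda\otimes b\mapsto(\lambda_1 b,\dots,\lambda_n b)$, specialised to $B=\C^m$, one gets $\phi(e_i\otimes e_j)=e_{(i,j)}$. For $\psi$, unwind the Yoneda correspondence: $\psi$ is the unique morphism with $\alpha_B(g)=\tilde g\circ\psi$ for all $B$ and all $g\colon\C^n\to[\C^m,B]$, where $\tilde g\colon\C^n\otimes\C^m\to B$ is the image of $g$ under $\Ban(\C^n,[\C^m,B])\cong\Ban(\C^n\otimes\C^m,B)$; and the monomorphism into $\Bil(\C^n\times\C^m,B)$ identifies $\alpha_B(g)$ as the linear map sending $e_{(i,j)}$ to $g(e_i)(e_j)=\tilde g(e_i\otimes e_j)$. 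Hence $\tilde g\big(\psi(e_{(i,j)})\big)=\tilde g(e_i\otimes e_j)$ for every such $\tilde g$; taking $B=\C^n\otimes\C^m$ and $\tilde g=\id$ gives $\psi(e_{(i,j)})=e_i\otimes e_j$. Comparing the two formulas on bases yields $\phi\psi=\id_{\C^{n\times m}}$ and $\psi\phi=\id_{\C^n\otimes\C^m}$, and then $\|x\|=\|\psi\phi x\|\le\|\phi x\|\le\|x\|$ (with the symmetric statement) shows $\phi$ is an isometric bijection, i.e. an isomorphism in $\Ban$.

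The only genuinely non-formal ingredient is the cited fact that $\C^n\otimes\C^m$ is a completion of the algebraic tensor product: this is what lets us pass to finite dimensions, after which everything is bookkeeping with bases --- indeed the real content of the section (the monoidal closed structure, the mono $\alpha_\bullet$, Yoneda, and the completion statement) has already been assembled in the preceding paragraphs. The point to watch is that an isomorphism in $\Ban$ must be an isometry, not merely a linear homeomorphism, so one really does need contractivity in both directions, which is exactly the content of the short norm inequality above. If one prefers not to compute $\psi$ on the basis, an alternative finish is available: naturality already gives $\psi\circ\phi=\id$ directly, so $\phi$ is a split monomorphism; and $\psi$, being an epimorphism of $\Ban$, has dense range, hence, by finite-dimensionality of $\C^n\otimes\C^m$, full range, so $\psi$ is a surjective linear map between spaces of the same dimension $n\times m$ and thus invertible, forcing $\phi=\psi^{-1}$; the same norm inequality then completes the proof.
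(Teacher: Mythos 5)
There is a genuine gap here, and in fact the statement you are trying to prove is false: the paper's own ``Correction'' section retracts Proposition~1 (and Corollary~1), noting that the canonical contraction $\ell_2(X)\otimes\ell_2(Y)\to\ell_2(X\times Y)$ is not in general an isomorphism in $\Ban$. The failure point in your argument is the very first assertion, that $\psi\colon\C^{n\times m}\to\C^n\otimes\C^m$, $e_{(i,j)}\mapsto e_i\otimes e_j$, is a morphism of $\Ban$, i.e.\ a linear contraction. The tensor product giving the monoidal closed structure on $\Ban$ (the left adjoint to $[-,-]$) is the projective tensor product, so the norm of $\sum_{i,j}c_{ij}\,e_i\otimes e_j$ in $\C^n\otimes\C^m$ is the trace (nuclear) norm of the matrix $(c_{ij})$, whereas its norm in $\C^{n\times m}=\bigoplus^{nm}\C$ is the Frobenius ($\ell_2$) norm; the trace norm strictly dominates the Frobenius norm as soon as the matrix has rank at least $2$. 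Concretely, for $n=m=2$ and $c=\frac{1}{\sqrt 2}\left(e_{(1,1)}+e_{(2,2)}\right)$ one has $\|c\|=1$ but $\|\psi(c)\|=\sqrt 2$. So $\psi$ is not a contraction, and the inequality $\|x\|=\|\psi\phi x\|\leq\|\phi x\|$ that you use to conclude $\phi$ is an isometry is unjustified. (The same error is already present in the paper: $\alpha_B$ does not actually land in $\Ban(\C^{n\times m},B)$, only in the bounded maps of norm at most $\sqrt{n}$, so the Yoneda argument does not produce a morphism $\psi$ of $\Ban$ in the first place.)

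Everything else in your write-up is sound as far as it goes: $\phi$ is indeed a contraction, it is a linear bijection in finite dimensions, and your identification of both composites on bases is correct. But an isomorphism in $\Ban$ (morphisms being contractions) must be a surjective isometry, and $\phi$ is not one: its linear inverse has norm $\sqrt{\min(n,m)}$. Your own closing remark, that ``one really does need contractivity in both directions,'' identifies exactly the ingredient that cannot be supplied; the ``alternative finish'' fails for the same reason, since it again presupposes that $\psi$ is a morphism of $\Ban$.
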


\begin{prop} We also have $\colim_{n \subset X} \bigoplus^n \C \cong \ell_2(X) = \bigoplus_X \C$ in $\Ban$ for all sets $X$. \end{prop}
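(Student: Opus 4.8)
The plan is to exhibit $\ell_2(X)$, equipped with the coordinate inclusions as its structure maps, as the colimit in $\Ban$ of the directed diagram of finite coordinate subspaces, by checking the universal property directly --- the one delicate point being that in $\Ban$ (unlike in plain vector spaces) the colimit is obtained by \emph{completing} the naive union. Let $\mathcal F(X)$ be the directed poset of finite subsets $n\subseteq X$; for $n\subseteq n'$ the coordinate inclusion $\bigoplus^n\C\hookrightarrow\bigoplus^{n'}\C$ is an $\ell_2$-isometry, hence a contraction, so we have a genuine $\mathcal F(X)$-indexed diagram $D$ in $\Ban$, and the $\ell_2$-isometric coordinate inclusions $\iota_n:\bigoplus^n\C\hookrightarrow\ell_2(X)$ form a cocone under it.

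First I would observe that the union $\bigcup_n\iota_n\!\left(\bigoplus^n\C\right)$ is precisely the subspace $c_{00}(X)\subseteq\ell_2(X)$ of finitely supported functions, and that $(c_{00}(X),\|\cdot\|_2)$ together with the maps $\iota_n$ is the colimit of $D$ in the category of normed spaces and contractions: for a directed system of isometric embeddings the colimit vector space is $\bigcup_n\C^n=c_{00}(X)$ and the colimit seminorm $\|x\|=\inf\{\,\|x_n\|:\iota_n(x_n)=x\,\}$ is attained and equals $\|x\|_2$. The second ingredient is the standard fact that $c_{00}(X)$ is dense in $\ell_2(X)$ and $\ell_2(X)$ is complete, i.e.\ that $\ell_2(X)$ is the completion of $(c_{00}(X),\|\cdot\|_2)$. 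Granting these, the verification is routine: any compatible family of contractions $f_n:\bigoplus^n\C\to B$ into a Banach space $B$ glues uniquely to a contraction $c_{00}(X)\to B$, which by density and completeness extends uniquely to a contraction $\ell_2(X)\to B$ restricting to each $f_n$; and conversely any contraction $\ell_2(X)\to B$ compatible with the cocone is determined by its restriction to the dense subspace $c_{00}(X)$. Conceptually this is just the statement that completion, being left adjoint to the inclusion of $\Ban$ into normed spaces, preserves colimits, applied to the normed-space colimit $c_{00}(X)$.

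The only real content --- hence the step I would flag as the crux rather than as a genuine obstacle --- is the passage from the set-theoretic union to its completion: the colimit in $\Ban$ is \emph{not} $c_{00}(X)$ itself (that is the colimit in normed, or in plain, vector spaces), and the proposition asserts exactly that the Banach completion of $c_{00}(X)$ under the $\ell_2$-norm is the familiar Hilbert space $\ell_2(X)$. Everything else is the bookkeeping of gluing and extending contractions along a directed system of isometric inclusions, and it goes through verbatim for arbitrary $X$, since every element of $\ell_2(X)$ has countable support and is approximated in norm by its finite truncations.
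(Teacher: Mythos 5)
Your proof is correct and is exactly the standard argument the paper has in mind when it declares the proof ``straightforward'': identify the normed-space colimit of the directed system of isometric coordinate inclusions with $(c_{00}(X),\|\cdot\|_2)$, then use density and completeness (equivalently, that completion is a left adjoint) to see that the colimit in $\Ban$ is the completion $\ell_2(X)$. You correctly isolate the one substantive point --- that the $\Ban$-colimit is the completion of the naive union rather than the union itself --- so nothing is missing.
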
 
The proof is straightforward.

\begin{cor} We have $\ell_2(X) \otimes \ell_2(Y) \cong \ell_2(X \times Y)$ in $\Ban$ for all $X,Y \in \{0,1,2,\ldots,\mathbb N\}$\end{cor}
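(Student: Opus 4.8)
The plan is to combine Propositions 1 and 2 with the fact that the tensor product of $\Ban$ preserves colimits in each variable, together with a routine cofinality argument; no hard analysis is needed once those ingredients are in hand.

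First, recall that $\Ban$ is symmetric monoidal \emph{closed}, so for every Banach space $B$ the functor $-\ox B$ has the right adjoint $[B,-]$, and by symmetry $A\ox -$ has right adjoint $[A,-]$; hence $\ox$ is cocontinuous separately in each of its two arguments. Using Proposition 2 to write $\ell_2(X)\cong\colim_{n\subset X}\C^n$ and $\ell_2(Y)\cong\colim_{m\subset Y}\C^m$, where the colimits range over the directed posets of finite subsets (and $\C^n=\bigoplus^n\C$ as in the paper), I obtain
\[
\ell_2(X)\ox\ell_2(Y)\;\cong\;\Bigl(\colim_{n\subset X}\C^n\Bigr)\ox\Bigl(\colim_{m\subset Y}\C^m\Bigr)\;\cong\;\colim_{n\subset X}\,\colim_{m\subset Y}\,\bigl(\C^n\ox\C^m\bigr).
\]
By Proposition 1 the inner object is $\C^{n\times m}$, and since the isomorphism of Proposition 1 is assembled from the canonical maps it is natural in $n$ and $m$, so the family $\{\C^n\ox\C^m\}$ is isomorphic, as a diagram, to $\{\C^{n\times m}\}$. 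Therefore the right-hand side is $\colim_{n\subset X}\colim_{m\subset Y}\C^{n\times m}$, which by the usual interchange of iterated colimits is the colimit $\colim_{(n,m)}\C^{n\times m}$ over the product poset of pairs of finite subsets.

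Next I would invoke cofinality. Every finite subset $S\subseteq X\times Y$ is contained in the rectangle $\pi_1(S)\times\pi_2(S)$, which is of the form $n\times m$ with $n\subset X$, $m\subset Y$ finite; conversely each such rectangle is itself a finite subset of $X\times Y$. Hence the rectangles form a cofinal subposet of the poset of all finite subsets of $X\times Y$, so
\[
\colim_{(n,m)}\C^{n\times m}\;\cong\;\colim_{S\subset X\times Y}\C^S\;\cong\;\ell_2(X\times Y),
\]
the last isomorphism being Proposition 2 applied to the set $X\times Y$. Chaining the displayed isomorphisms gives $\ell_2(X)\ox\ell_2(Y)\cong\ell_2(X\times Y)$ in $\Ban$. (When $X,Y$ are finite this collapses directly to Proposition 1, since then $\ell_2(X)=\C^{|X|}$ and $X\times Y$ has $|X|\cdot|Y|$ elements.)

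The one point needing genuine care — and essentially the only obstacle — is that each isomorphism must be read in $\Ban$, i.e.\ as an isometric isomorphism, and that the connecting maps of the two colimit systems really match up: for $n\subset n'$ and $m\subset m'$ the map $\C^n\ox\C^m\to\C^{n'}\ox\C^{m'}$ induced by the coordinate inclusions must be carried by the Proposition 1 isomorphisms to the coordinate inclusion $\C^{n\times m}\to\C^{n'\times m'}$. This is exactly the naturality of Proposition 1, which holds because its isomorphism is built from the canonical norm-decreasing maps described before its statement; granting it, cocontinuity of $\ox$ and the colimit bookkeeping are purely formal.
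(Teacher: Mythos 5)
Your derivation is exactly the paper's: its stated proof is the one-liner ``follows from Propositions 1 and 2, and the preservation of colimits by $\otimes$ in $\Ban$,'' and you have merely supplied the interchange-of-colimits, cofinality, and naturality bookkeeping, all of which is correct as formal category theory. Be aware, though, that the point you isolate as ``needing genuine care'' --- that every isomorphism must be read isometrically in $\Ban$ --- is precisely where the result breaks: the paper's appended Correction retracts Proposition 1 and this Corollary, since the canonical contraction $\ell_2(X)\otimes\ell_2(Y)\too\ell_2(X\times Y)$ is not an isometric isomorphism for the projective tensor product that the closed structure of $\Ban$ forces (already for finite $n=m$ the element corresponding to the identity matrix has projective norm $n$ but Hilbert norm $\sqrt{n}$). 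So your argument is valid conditional on Proposition 1, matching the paper's intent, but that input is false and the colimit formalism cannot repair it.
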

The proof follows from Propositions 1 and 2, and the preservation of colimits by $\otimes$ in $\Ban$. 

Each object in $\Ban(\mathbb N)$ is
reflexive as a Banach space, so we get: \begin{prop} The monoidal replete full subcategory \[ \Ban(\mathbb N) \subset \Ban \] is a *-autonomous
monoidal category.\end{prop}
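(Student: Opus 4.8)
To prove the proposition the plan is to exhibit the unit object $\C = \ell_2(1)$ as a dualizing object. First I would record that $\Ban(\mathbb N)$ is a full symmetric monoidal subcategory of $\Ban$: it contains the unit $\C = \ell_2(1)$, and by Corollary 1 it is closed under $\otimes$, since for $X, Y \in \{0,1,2,\ldots,\mathbb N\}$ the cardinal $|X \times Y|$ again lies in that set (a product of two countable sets being countable). Because the subcategory is replete it inherits the associativity, symmetry and unit constraints of $\Ban$ verbatim, so the genuine content is to produce an internal hom that lands back inside $\Ban(\mathbb N)$ and to check the double-dualization isomorphism.

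The next step would use the self-duality of Hilbert spaces. By the Riesz representation theorem the canonical (conjugate-linear) isometry identifies $\ell_2(X)^{\ast} = [\ell_2(X), \C]$ with $\ell_2(X)$, so $\Ban(\mathbb N)$ is closed under the dual functor $(-)^{\ast} = [-, \C] \colon \Ban^{\op} \to \Ban$; and each of its objects, being a Hilbert space, is reflexive, i.e. the canonical map $A \to A^{\ast\ast}$ is an isometric isomorphism. Now for arbitrary $A$ and reflexive $B$ the closed structure of $\Ban$ supplies
\[ [A, B] \;\cong\; [A, B^{\ast\ast}] \;\cong\; [A \otimes B^{\ast}, \C] \;=\; (A \otimes B^{\ast})^{\ast}. \]
Combining this with Corollary 1 and the closure of $\Ban(\mathbb N)$ under $(-)^{\ast}$, the space $[A, B]$ again lies in $\Ban(\mathbb N)$ whenever $A$ and $B$ do; since the subcategory is full, the tensor--hom adjunction of $\Ban$ restricts to it, so $\Ban(\mathbb N)$ is symmetric monoidal closed with internal hom computed as in $\Ban$.

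Finally, with $\bot := \C$ the canonical natural transformation $A \to [[A, \bot], \bot] = A^{\ast\ast}$ is precisely the reflexivity isomorphism above, hence invertible for every object; this is the defining property of a dualizing object, and so $\Ban(\mathbb N)$ is *-autonomous. I expect the load-bearing step to be the identification $[A, B] \cong (A \otimes B^{\ast})^{\ast}$ together with the assertion that it keeps us inside $\Ban(\mathbb N)$: this is the point where the Banach space of all bounded operators between two objects of $\Ban(\mathbb N)$, equipped with its operator norm, must be seen to be isometrically some $\ell_2(Z)$ — so it is exactly here that Propositions 1 and 2, via Corollary 1, and the isometric self-duality of $\ell_2$, are doing all the work, and it is the step that most repays careful scrutiny (and that the appended correction revisits).
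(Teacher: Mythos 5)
Your proof is correct relative to the paper's (later-retracted) Corollary 1 and follows essentially the same route as the paper: reflexivity of the spaces $\ell_2(X)$, self-duality $\ell_2(X)^* \cong \ell_2(X)$, closure under $\otimes$ via Corollary 1, and the closed structure of $\Ban$ — the paper merely packages this as the natural isomorphism $\Ban(\mathbb N)(A \otimes B, C) \cong \Ban(\mathbb N)(A,(B \otimes C^*)^*)$ rather than as ``$\C$ is a dualizing object,'' and your $[A,B] \cong (A \otimes B^*)^*$ is exactly its internal hom, so you have also correctly located the load-bearing (and, per the Correction, failing) step. One small caveat: the Riesz isometry is conjugate-linear and hence not itself a morphism of $\Ban$, but composing with coordinatewise conjugation on $\ell_2(X)$ gives a genuine $\C$-linear isometry, so the repleteness argument still goes through.
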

Proof: \begin{align*}
	\Ban(\mathbb N)(A \otimes B, C) 
		&= \Ban(A \otimes B, C) \\
		&\cong \Ban\left(B \otimes A, \left[\left[C,\C\right],\C \right] \right) \\
		&\cong \Ban\left(B, \left[A, \left[\left[C,\C\right],\C \right] \right] \right) \\
		&\cong \Ban\left(B, \left[\left[C,\C\right],\left[A,\C \right] \right] \right) \\
		&\cong \Ban\left(B \otimes \left[C,\C\right], \left[A,\C \right] \right) \\
		&= \Ban(\mathbb N)\left(B \otimes C^*, A^* \right) \\
\end{align*} where $A^*$ denotes $\left[A,\C\right]$, etc., so there is a natural isomorphism: \[ \Ban(\mathbb N)(A \otimes B, C) \cong 
\Ban(\mathbb N)(A,(B \otimes C^*)^*) \]

Now consider the $\C$-linearisations \[ \C \Ban( \mathbb N) \subset \C \Ban \] where $\C \Ban(\mathbb N)$ is automatically 
*-autonomous from Proposition 3. First, note that the (finite) direct sum $A \oplus B$ of two Banach spaces $A$ and $B$ (which is the $\C$-vector space product $A \times B$
with the norm $||(a,b)|| = \sqrt{||a||^2 + ||b||^2}$ ) becomes a biproduct in $\C \Ban$. This is fairly immediate from the fact that we have
bijective contraction maps \[ A + B \too A \oplus B \qquad \qquad A \oplus B \too A \times B \]
in $\Ban$, where the coproduct $A + B$ in $\Ban$ is the $\C$-vector space product $A \times B$ with the norm \[ ||(a,b)|| = ||a|| + ||b|| \] while the 
product $A \times B$ in $\Ban$ is the $\C$-vector space product $A \times B$ with the norm \[ ||(a,b)|| = \max\left(||a||,||b||\right) \]
Then the canonical diagram: \[ \bfig
\square<2000,500>[\C\Ban(A,B\oplus C)`\C\Ban(A,B)\oplus\C\Ban(A,C)`\C\Ban(A,B \times C)`\C\Ban(A,B)\otimes\C\Ban(A,C);i`\rm{can}`\rm{can}`\cong]
\efig \]
commutes in $\Vect_\C$, so $i$ is in injection, and is also a retraction, hence, it is an isomorphism. Similarly, the diagram \[ \bfig
\square<2000,500>[\C\Ban(A \oplus B,C)`\C\Ban(A,C)\oplus\C\Ban(B,C)`\C\Ban(A+B,C)`\C\Ban(A,C)\otimes\C\Ban(B,C);j`\rm{can}`\rm{can}`\cong]
\efig \] 
commutes in $\Vect_\C$, so then $j$ also is an isomorphism. Thus:

\begin{prop} The $\C$-linear category $\C\Ban(\mathbb N)$ is additive and *-autonomous. \end{prop}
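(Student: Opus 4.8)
The two assertions have been almost entirely set up in the discussion above, so the plan is to record which earlier facts supply each of them and to isolate the one step that is not purely formal.

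For the *-autonomous half I would lean on the remark made just after Proposition~3 that $\C\Ban(\mathbb N)$ is ``automatically'' *-autonomous, spelling out only the mechanism: $\C$-linearisation is a structure-preserving passage that keeps the objects, transports the symmetric monoidal structure and the internal-hom adjunction (these being specified by universal properties), and preserves the reflexivity isomorphisms $A \cong A^{**}$ which make $\C = \ell_2(1)$ a dualizing object. The objects of $\Ban(\mathbb N)$ and their closure under the operations of the *-autonomous structure --- in particular under $\otimes$ by Corollary~1, and under $(-)^*$ since $\ell_2(X)^* \cong \ell_2(X)$ --- are inherited unchanged. Hence $\C\Ban(\mathbb N)$ is *-autonomous, with no work required beyond Proposition~3.

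For additivity I would use the standard fact that a category enriched in abelian groups which has a zero object and binary products is automatically additive, the products then being biproducts. Enrichment in abelian groups is immediate, $\C\Ban(\mathbb N)$ being $\C$-linear and hence $\Vect_\C$-enriched; the zero object is $\ell_2(0) = \{0\}$, which lies in the subcategory and has trivial hom-spaces on both sides; and binary products are supplied by the two commuting squares displayed above, which exhibit the comparison maps $i$ and $j$ as isomorphisms, so that the Hilbert-space direct sum $A \oplus B$ is at once the product and the coproduct of $A$ and $B$ in $\C\Ban(\mathbb N)$. That $A \oplus B$ stays in the subcategory is the observation $\ell_2(X) \oplus \ell_2(Y) \cong \ell_2(X \sqcup Y)$, with $X \sqcup Y \in \{0,1,2,\ldots,\mathbb N\}$ whenever $X$ and $Y$ are. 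Assembling these gives finite biproducts, hence an additive category.

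The sole step that is not formal bookkeeping is the one already flagged in the text: the half of ``$i$ is an isomorphism'' that produces a section of $i$ rests on the identity $\iota_A \pi_A + \iota_B \pi_B = 1_{A \oplus B}$, which \emph{adds} two morphisms and is therefore unavailable in $\Ban(\mathbb N)$ itself; it appears precisely on passing to the $\C$-linear setting, together with the (immediate) fact that the bijective contractions $A + B \to A \oplus B \to A \times B$ have bounded inverses and so become isomorphisms in $\C\Ban$. This is exactly why $\Ban(\mathbb N)$ was only *-autonomous whereas its $\C$-linearisation is additive as well; everything else --- the self-duality of the $\ell_2(X)$ and the closure of the index set under disjoint union --- is merely what keeps the construction inside the replete full subcategory.
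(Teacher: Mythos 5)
Your proposal is correct and follows the paper's own route: the *-autonomy of $\C\Ban(\mathbb N)$ is taken directly from Proposition 3 under $\C$-linearisation, and additivity comes from the two displayed squares exhibiting $A \oplus B$ as a biproduct, with the ability to add morphisms in $\C\Ban$ being precisely what supplies the section of $i$ (and of $j$). You also make explicit two points the paper leaves tacit --- the zero object $\ell_2(0)$ and the closure $\ell_2(X)\oplus\ell_2(Y)\cong\ell_2(X\sqcup Y)$ of the replete subcategory under direct sums --- which fills in rather than departs from the argument.
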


Finally, we note that the braid groupoid $\mathbb B$ generates a convolution functor cateogry \[ [\mathbb B,\C \Ban(\mathbb N)]_{\rm f.s.} \]
consisting of the functors \[ F : \mathbb B \too \C\Ban(\mathbb N) \] of finite support, and the natural transformations between them. Here the
(lax) tensor product is given by: \[ F * G(l) = \bigoplus_{m,n} \mathbb B(m+n,l) \cdot (F(m) \otimes G(n)) \] where $\mathbb B(m,n) \cdot B$ is
defined to be the countable direct sum \[ \bigoplus_{\mathbb B(m,n)} B \] in $\Ban$ for each Banach space $B$. This yields a lax monoidal
functor category (with lax unit $\mathbb B(0,-) \cdot \C$). The functor category \[ [\mathbb B, \C \Ban(\mathbb N)]_{\rm f.s.} \] also has on it
the pointwise tensor product \[ F \otimes G(l) = F(l) \otimes G(l) \] which is $\C$-linear, additive, and *-autonomous.

\section*{Correction}
The canonical contraction map \[ l_2(X) \otimes l_2(Y) \too l_2(X \times Y) \]
in Proposition 1 and Corollary 1 is not in general an isomorphism in $\Ban$ (as pointed out by Dr. Yemon Choi) which negates the 
overall purpose of the article.

One ``alternative'', that seems not too significant or useful, is to consider the compact-closed category $\fdhilb$ of
finite-dimensional Hilbert spaces, and adjoin an abstract terminal object called ``$l_2(\N)$''. One then obtains a symmetric monoidal
*-autonomous category extending the structure of $\fdhilb$ on defining also: \begin{align*}
	l_2(X) \otimes l_2(\N) &= l_2(\N) \mbox{ for } X \neq 0 \\
	l_2(0) \otimes l_2(\N) &= l_2(0)
\end{align*}

with $l_2(0)^* = l_2(\N)$ and $l_2(\N)^* = l_2(0)$ instead of $l_2(0)^* = l_2(0)$.

\section{A *-autonomous category of $\C$-linear spaces}

We shall somewhat overstate the main result we need:

\begin{prop} If $\Pi B_x$ is a topological product of complex Banach spaces and $V$ is a $\C$-subspace (subspace topology) of $\Pi B_x$, then
any continuous $\C$-linear map from $V$ to $\C$ extends to $\Pi B_x$.\end{prop}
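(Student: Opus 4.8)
The plan is to reduce the statement to the classical Hahn--Banach theorem for a single Banach space, exploiting the fact that the product topology only ``sees'' finitely many coordinates at a time. Write $E = \prod_x B_x$ with the product topology, which is the locally convex topology generated by the seminorms $p_F(b) = \max_{x \in F} \|b_x\|$ as $F$ ranges over the finite subsets of the index set. The subspace $V \subseteq E$ carries the restrictions of these seminorms, so a base of neighbourhoods of $0$ in $V$ is given by the sets $\{v \in V : p_F(v) < \varepsilon\}$, $F$ finite, $\varepsilon > 0$.

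First I would translate the continuity of $f \colon V \to \C$ into a seminorm estimate. Since $f$ is continuous and linear, there is a finite $F$ and an $\varepsilon > 0$ with $|f(v)| \le 1$ whenever $v \in V$ and $p_F(v) < \varepsilon$; the usual homogeneity argument (scaling $v$, and treating separately the case $p_F(v) = 0$, where $f(v)$ must vanish because every multiple $tv$ lies in the neighbourhood) upgrades this to $|f(v)| \le \varepsilon^{-1} p_F(v)$ for all $v \in V$.

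Next I would factor $f$ through the finite subproduct. Let $B_F = \prod_{x \in F} B_x$ with the norm $\|w\| = \max_{x \in F} \|w_x\|$, so that $B_F$ is a Banach space and the projection $\pi_F \colon E \to B_F$ is continuous $\C$-linear with $p_F = \|\pi_F(-)\|$. The estimate above shows $f$ vanishes on $V \cap \ker \pi_F$, hence descends to a well-defined linear functional $\bar f$ on the subspace $\pi_F(V) \subseteq B_F$ with $|\bar f(w)| \le \varepsilon^{-1}\|w\|$; in particular $\bar f$ is bounded.

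Finally I would invoke the classical complex Hahn--Banach theorem: $\bar f$ extends to a bounded linear functional $g \colon B_F \to \C$, and then $g \circ \pi_F \colon \prod_x B_x \to \C$ is continuous, $\C$-linear, and restricts to $f$ on $V$. The only points needing care are the standard facts that the product topology is generated by the finite-maximum seminorms $p_F$ and that the (complex) Hahn--Banach theorem applies to the normed space $B_F$; neither is a real obstacle, so the argument is essentially a bookkeeping reduction to the one-Banach-space case. (If completeness of the $B_x$ is genuinely used anywhere it is only to make $B_F$ a Banach space, which the Banach-space version of Hahn--Banach does not even require.)
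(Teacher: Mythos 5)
Your proof is correct and follows the same route the paper intends: the paper's ``proof'' is just a citation of Kaplan's Theorem~1 (whose content is precisely that a continuous functional on a topological product factors through a finite subproduct) together with the Hahn--Banach theorem, and your seminorm argument is a clean, self-contained derivation of exactly that factorization, followed by the same Hahn--Banach extension step. The only difference is that you prove directly what the paper delegates to the reference, and you correctly observe that completeness of the $B_x$ is never really needed.
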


The proof follows from that of Kaplan\cite{K} Theorem 1, together with the Hahn-Banach Theorem. We need the special case $B_x = \C$ for all
$x \in X$.

Let $\Vect(\C)$ denote the monoidal closed category of topological $\C$-linear spaces and continuous $\C$-linear maps, with the pointwise
internal-hom $(-,-)$ and tensor product. Let $\mathcal P(\C)$ denote the full reflective subcategory of $\Vect(\C)$ determined by the
$\C$-subspaces of powers of $\C$. Then $\mathcal P(\C)$ is complete and closed under exponentiation in $\Vect(\C)$, hence is monoidal
closed by \cite{D}, and in fact has all small colimits since $\Vect(\C)$ does.

\begin{lemma} The canonical map $V \too ((V,\C),\C)$ is a surjection for all $V \in \Vect(\C)$. \end{lemma}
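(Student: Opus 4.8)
The plan is to reduce the statement to Proposition 5 in its special case $B_x = \C$, applied not to $V$ but to $(V,\C)$. First I would observe that, by the very definition of the pointwise internal-hom, the evaluation map
\[ e \colon (V,\C) \too \C^{V}, \qquad f \longmapsto \big(f(v)\big)_{v \in V}, \]
where $\C^{V}$ denotes the topological power (product) of copies of $\C$ indexed by the underlying set of $V$, is an embedding of topological $\C$-linear spaces. It is injective, since a continuous linear functional annihilating every $v \in V$ is zero; and the pointwise topology on $(V,\C)$ is by construction the initial topology for the evaluations $\mathrm{ev}_v \colon f \mapsto f(v)$, which are exactly the composites of $e$ with the coordinate projections, so it agrees with the subspace topology from $\C^{V}$. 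Hence $(V,\C)$ is, up to isomorphism, a $\C$-subspace (with subspace topology) of a power of $\C$, and Proposition 5 is applicable to it.

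Next I would apply Proposition 5 to this subspace: any continuous $\C$-linear functional $\phi \colon (V,\C) \too \C$ extends along $e$ to a continuous $\C$-linear functional $\widetilde\phi \colon \C^{V} \too \C$. Then I would invoke the standard description of the continuous dual of a power of $\C$: a continuous linear functional on $\C^{V}$, being bounded on some basic neighbourhood of $0$ of the form $\{\,x : |x_{v}| < \varepsilon \text{ for } v \in F\,\}$ with $F$ finite, must vanish on the subspace $\{x : x_v = 0,\ v \in F\}$, hence factors through the projection $\C^{V} \too \C^{F}$ and is therefore a finite linear combination $\sum_{i=1}^{k} c_i \pi_{v_i}$ of coordinate projections. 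This is the only genuine computation needed, and it is routine; it is also the infinite analogue of the isomorphism $(\C^n)^* \cong \C^n$ already used in Section 2.

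Finally, restricting back along $e$ and using $\pi_{v_i} \circ e = \mathrm{ev}_{v_i}$, I would obtain
\[ \phi \;=\; \widetilde\phi \circ e \;=\; \sum_{i=1}^{k} c_i\, \mathrm{ev}_{v_i} \;=\; \mathrm{ev}_{w}, \qquad w := \sum_{i=1}^{k} c_i v_i \in V, \]
the last equality because $f \mapsto \sum_i c_i f(v_i) = f\big(\sum_i c_i v_i\big)$. Thus every $\phi \in ((V,\C),\C)$ is the image of some $w \in V$ under the canonical map $v \mapsto \mathrm{ev}_v$, i.e.\ that map is a surjection. No separation hypothesis on $V$ is used, matching the stated generality (the kernel of the canonical map is the intersection of all $\ker f$, $f \in (V,\C)$).

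The substance of the argument is carried entirely by Proposition 5, i.e.\ by Kaplan's theorem together with Hahn--Banach: it is precisely what prevents $(V,\C)$, sitting inside $\C^{V}$, from acquiring continuous functionals beyond the restrictions of the ``finitely supported'' ones on the ambient product. Everything else is bookkeeping — the two identifications of a pointwise-hom with a subspace of a power of $\C$, and the coordinate-projection computation. I expect the only place calling for care is the first step, namely checking that the internal-hom topology on $(V,\C)$ genuinely is the subspace topology induced by $e$, so that Proposition 5 legitimately applies; alternatively one could bypass Proposition 5 and appeal to the classical fact that the dual of a vector space carrying a weak topology $\sigma(E',E)$ is $E$ (Mackey--Arens), but the route through Proposition 5 is the one prepared by the preceding paragraph.
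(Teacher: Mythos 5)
Your proof is correct and is essentially the paper's own argument, written element-wise rather than diagrammatically: the paper packages the same two facts — that $(V,\C)$ embeds as a subspace of the power $\C^{\hom(\C,V)}$ so that Proposition 5 gives surjectivity onto $((V,\C),\C)$, and that continuous functionals on a power of $\C$ factor through finite subpowers and are hence finite combinations of evaluations — into a single commuting square obtained from the Yoneda lemma. Nothing is missing.
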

\begin{proof} Consider the following diagram, which is natural in $V$:
\[ \bfig \square<1000,500>[\hom(\C,V) \cdot (\C,\C)`V`(\C^{\hom(\C,V)},\C)`((V,\C),\C);h_V`f_V`{\rm can}`g_V] \efig \]
which is easily seen to commute by applying the Yoneda lemma to $V \in \Vect(\C)$, where ``$\cdot$'' denotes copower in $\Vect(\C)$. Then
$g_V$ is a surjection by Proposition 5, and $f_V$ is a surjection because any continuous $\C$-linear map \[ \C^{\hom(\C,V)} \too \C \]
factors (uniquely) through projection onto some finite power $\C^n$ where $n \subset \hom(\C,V)$. So $V \too ((V,\C),\C)$ is a surjection.\end{proof}

Now the objects of $\mathcal P(\C)$ are precisely the subspaces $V \leq (W,\C)$ for some $W \in \Vect(\C)$, so, since
\[ \bfig \ptriangle/->`->`-->/<1000,500>[V`((V,\C),\C)`(W,\C);\hbox{can}`\leq`] \efig \]
commutes for such a $V$, we get $V \cong ((V,\C),\C)$ by lemma 1. Hence, from the proof of Proposition 3, we get:
\begin{prop} $\mathcal P(\C)$ is a $\C$-linear *-autonomous category.\end{prop}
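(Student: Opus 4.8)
The plan is to run the same six-line computation that established Proposition 3, but now inside $\mathcal P(\C)$ rather than $\Ban(\mathbb N)$, using Lemma 1 as the replacement for the reflexivity of the objects of $\Ban(\mathbb N)$. First I would record that $\mathcal P(\C)$ is a $\C$-linear symmetric monoidal closed category: it is $\C$-linear because it is a full subcategory of $\Vect(\C)$ closed under the relevant operations, and it is monoidal closed by the citation to \cite{D} already invoked in the excerpt, since $\mathcal P(\C)$ is complete and closed under exponentiation in $\Vect(\C)$. I would write $(-,-)$ for the internal hom and $V^* := (V,\C)$ for the dual, with $\C$ itself serving as the dualizing object.

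Next I would verify that $\C \in \mathcal P(\C)$ and that it is indeed dualizing, i.e.\ that the canonical map $V \to V^{**}$ is an isomorphism for every $V \in \mathcal P(\C)$. This is exactly the paragraph just before the statement: every object of $\mathcal P(\C)$ sits as a subspace $V \leq (W,\C)$ for some $W$, the triangle with vertices $V$, $((V,\C),\C)$ and $(W,\C)$ commutes, the canonical map $V \to ((V,\C),\C)$ is a surjection by Lemma 1, and a monomorphism because it has a retraction exhibited by that triangle (or simply because $V \leq (W,\C)$ and the composite to $(W,\C)$ is the inclusion); hence it is an isomorphism in $\Vect(\C)$, and therefore in $\mathcal P(\C)$ since the latter is full. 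So $\mathcal P(\C)$ has a dualizing object.

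With those two ingredients in hand, the *-autonomous structure is forced by the formal argument in the proof of Proposition 3. Concretely, for $A,B,C \in \mathcal P(\C)$ one has the chain
\[
\mathcal P(\C)(A \otimes B, C) \cong \mathcal P(\C)\bigl(B \otimes A, (C^*)^*\bigr) \cong \mathcal P(\C)\bigl(B, (A, (C^*)^*)\bigr) \cong \mathcal P(\C)\bigl(B, ((C^*), (A^*))\bigr) \cong \mathcal P(\C)\bigl(B \otimes C^*, A^*\bigr),
\]
using, in order: $C \cong C^{**}$ (just established), the monoidal-closed adjunction, the natural isomorphism $(A,(C^*)^*) \cong ((C^*),(A^*))$ coming from symmetry of $\otimes$ and the closed structure (tensor--hom juggling, exactly as in the $\Ban(\mathbb N)$ computation), and the adjunction again. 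This produces a natural isomorphism $\mathcal P(\C)(A \otimes B, C) \cong \mathcal P(\C)(A, (B \otimes C^*)^*)$, which together with the symmetry of $\otimes$ is the defining data of a symmetric *-autonomous category.

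I expect the only real point needing care — the ``main obstacle'' — to be the verification that the canonical map $V \to V^{**}$ really is a \emph{monomorphism} and hence an isomorphism, since Lemma 1 only delivers surjectivity; but this is handled by the commuting triangle $V \hookrightarrow (W,\C)$ factoring through $V^{**}$, which was already assembled in the excerpt, so even this is essentially a matter of citing what precedes the statement. Everything else is the purely formal *-autonomous bookkeeping carried over verbatim from Proposition 3, together with the observation that $\mathcal P(\C)$ inherits $\C$-linearity and monoidal closedness from $\Vect(\C)$ via \cite{D}.
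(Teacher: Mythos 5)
Your proposal is correct and follows the paper's own route exactly: establish $V \cong ((V,\C),\C)$ for $V \in \mathcal P(\C)$ from Lemma 1 together with the commuting triangle through $(W,\C)$, then transport the formal chain of isomorphisms from the proof of Proposition 3 (the paper literally says ``from the proof of Proposition 3, we get''). Your parenthetical care about why the canonical map is invertible --- using the retraction induced by the triangle rather than bare injectivity, so that one is not relying on a continuous bijection being a homeomorphism --- is if anything slightly more explicit than the paper.
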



\begin{thebibliography}{9}

\bibitem{B} Barr, M., ``*-Autonomous categories'', \emph{Lecture Notes in Mathematics 752}, Springer-Verlag (1979).
\bibitem{D} Day, B.J. ``A reflection theorem for closed categories'', J. Pure Appl. Alg. 2 (1972), 1-11.
\bibitem{EK} Eilenberg, S., and Kelly, G.M., ``Closed categories'', \emph{Proc. Conf. on Cat. Alg., La Jolla 1965}, Springer-Verlag (1966), 421-562.
\bibitem{K} Kaplan, S., ``Extensions of the Pontrjagin duality, II: Direct and inverse sequences'', Duke Math. J. 17 (1950), 419-435.

\end{thebibliography}
\end{document}